\documentclass{llncs}

\pdfoutput=1
% % % % -------------------- % % % %
\usepackage{amsmath}
\usepackage{amssymb}
\usepackage{graphicx}
\usepackage{enumerate}
\usepackage{mathtools}
\usepackage{color}

\usepackage{cite}
\usepackage{centernot}
\usepackage{geometry}
\usepackage{array}
\usepackage{bbm}

\usepackage{tikz}
\usetikzlibrary{automata}
\usetikzlibrary{arrows,shapes,calc}
\usetikzlibrary{positioning}

% % % % ----------------------- % % % %

%\newtheorem{notation}{Notation}
%\newtheorem{theorem}{Theorem}
%\newtheorem{definition}{Definition}
%\newtheorem{remark}{Remark}
%\newtheorem{claim}{Claim}
%\newtheorem{proposition}{Proposition}
%\newtheorem{lemma}{Lemma}

% % % % -------------------- % % % %

% % % ------ MACROS -------- % % %

\newcommand{\dsl}{\displaystyle}

\newcommand{\IsDef}{\triangleq}			% "Is Defined" symbol

\newcommand{\Root}{\mathit{r}} % Initial node a.k.a. root
\newcommand{\W}{\mathrm{W}} % Walker 

\newcommand{\s}{\mathit{s}} % Step parameter

\newcommand{\Name}{NRRW}
%%%%%%%%%%%%%%%%%%%%%%%%%%%%%%%%%%%%%%%%%%%%%%%%%%%%%%%%%%%%%%%%%%%%%

\pagestyle{plain}

\begin{document}

\title{
%Graph Builder Random Walk Model
%Transience and Recurrence of Random Walks that \\
%Grow their Trees
%
Transient and Slim versus Recurrent and Fat: \\ Random Walks and the
Trees they Grow
}
% {\small \com{or: Random walks paving their way}}  

\author{ Giulio Iacobelli\inst{1} \and  Daniel R. Figueiredo\inst{2} \and Giovanni Neglia\inst{3} }

\institute{%
Instituto de Matem{\' a}tica\\Federal University of Rio de Janeiro (UFRJ), Brazil\\
\email{giulio@im.ufrj.br}	
\and
 Department of Computer and System Engineering, COPPE \\ Federal University of Rio de Janeiro (UFRJ), Brazil\\ 
 \email{daniel@land.ufrj.br}
 \and 
 NEO Team, INRIA, Sophia-Antipolis, France\\  \email{giovanni.neglia@inria.fr}
}

\maketitle

\begin{abstract}
Network growth models that embody principles such as preferential attachment and local attachment rules have received much attention over the last decade. Among various approaches, random walks have been leveraged to capture such principles. In this paper we consider the No Restart Random Walk (\Name{}) model where a walker builds its graph (tree) while moving around. In particular, the walker takes $\s$ steps (a parameter) on the current graph. A new node with degree one is added to the graph and connected to the node currently occupied by the walker. The walker then resumes, taking another $s$ steps, and the process repeats. We analyze this process from the perspective of the walker and the network, showing a fundamental dichotomy between transience and recurrence for the walker as well as power law and exponential degree distribution for the network. More precisely, we prove the following results:
\begin{itemize}
\item[]
\begin{enumerate}
\item[$\s=1$:]  the random walk is {\em transient} and the degree of every node is bounded from {\em above} by a geometric distribution.
\item[$\s$ even:] the random walk is {\em recurrent} and the degree of non-leaf nodes is bounded from {\em below} by a power law distribution with exponent decreasing in $\s$. We also provide a lower bound for the fraction of leaves in the graph, and for $\s=2$ our bound implies that the fraction of leaves goes to one as the graph size goes to infinity.
\end{enumerate}
\end{itemize}
\Name{} exhibits an interesting mutual dependency between graph building and random walking that is fundamentally influenced by the parity of $\s$. Understanding this kind of coupled dynamics is an important step towards modeling more realistic network growth processes. 
\end{abstract}

%%%%%%%%%%%%%%%%%%%%%%%%%%%%%%%%%%%%%%%%%%%%%%%%%%%%%%
%%%%%%%%%%%%%%%%%%%%%%%%%%%%%%%%%%%%%%%%%%%%%%%%%%%%%%

%!TEX root = main.tex

\section{Introduction}
\label{sec:intro}

Network growth models are fundamental to represent and understand the evolution of real networks, such as the web or social networks. Not surprisingly, a vast number of models that embody principles such as preferential attachment and local attachment rules have been proposed and applied over the last decade~\cite{van2016random,newman2010networks}. Capturing such principles is important not only because they tend to be present in real networks, but also because they can lead to properties such as heavy-tailed degree distribution and short distances, often observed in real networks.

A promising approach to capture such principles in a network growth model is to leverage random walks~\cite{ikeda2014network,vazquez2003growing,saramaki2004scale,cannings2013random,amorim2016growing}. In such models, the network grows as the walker moves with both processes being mutually dependent. In the Random Walk Model~\cite{saramaki2004scale,cannings2013random}, the walker takes $\s$ steps, a new node is added and connected to the current walker position, and the random walk restarts (choosing a new node uniformly at random). The ``No Restart Random Walk'' (\Name{}) Model~\cite{amorim2016growing} builds on this model by not requiring the walker to restart. In particular, \Name{} can be algorithmically described as follows:
\begin{enumerate}
\item[{\em 0.} ]
Start with a single node with a self-loop with a random walk on it.
\item[{\em 1.} ] 
Let the random walk take exactly $\s$ steps on the current graph.
\item[{\em 2.} ] 
Add and connect a new node with degree one to the current walker location.
\item[{\em 3.} ]
Go to step {\em 1}.
\end{enumerate}
Note that \Name{} is parsimonious and has a single parameter $s$, called the \emph{step parameter}. What graphs will \Name{} generate? Figure~\ref{fig:network-s} depicts graphs generated by simulating \Name{} for $\s=1$ and $\s=2$ for different number of nodes. Clearly, graphs generated for $s=1$ and $s=2$ are very different. As it turns out, the parity of $s$ and its magnitude play a fundamental role on the walker dynamics and on the graph structure. 

Interestingly, the \Name{} can be analyzed from two different perspectives: the walker, and the network. From the walker's perspective, a fundamental question is on the dichotomy between transience and recurrence~\cite{angel2014,disertori2015transience,dembo2014walking}. Will \Name{} give rise to transient or recurrent random walks? From the network perspective, a fundamental question is the dichotomy between a heavy-tailed and exponential degree distribution. What kind of degree distribution will \Name{} generate? Given the mutual dependency between walker and network the two perspectives are fundamentally intertwined. 

Indeed, Theorem~\ref{thm:s_1} illustrates this close relationship between walker and network characteristics, establishing that for $\s=1$ the random walk is {\em transient} and the degree distribution is upper bounded by an exponential distribution. Most interestingly, these observations on the walker and the network emerge together from the model dynamics without one  being the cause for the other! For $\s$ even we show that the walker is {\em recurrent} and the degree distribution of non-leaf nodes is bounded from below by a power-law distribution with exponent decreasing in $\s$. Moreover, for $\s$ even we also provide a lower bound for the fraction of leaves in the graph, and for $\s=2$ our bound implies that the fraction of leaves goes to one as the graph size goes to infinity. 

Why is the parity of $\s$ so fundamental? To help build intuition, consider $s=1$. In this case, the walker can always move to the node just added, and consequently connect a new node to it. For $s=2$, the walker can never connect a new node to the just added node (except if the self-loop is traversed). These two observations, the runaway effect and bouncing back effect, respectively, play a major role on the dynamics of \Name{}, as soon discussed.

\begin{figure}[t]
\begin{center}
\scalebox{0.60}{
\begin{tabular}{|c|m{6cm}|m{6cm}|m{6cm}|}
 \hline
& & & \\
\begin{tabular}{c}
{\large \rm step} \\ {\large \rm parameter}
\end{tabular}
{\large   \textbackslash }
\begin{tabular}{c}
{\large \rm graph } \\ {\large \rm size}
\end{tabular} 
 &
\begin{center}
 {\large $N=10^2$ }
\end{center}  
  &  
\begin{center}
 {\large $N=10^3$}
\end{center}   
& 
\begin{center}
 {\large $N=10^4$}
\end{center}  
\\
 \hline
{\Large $\s=1$} 
 &
\includegraphics[scale=0.27]{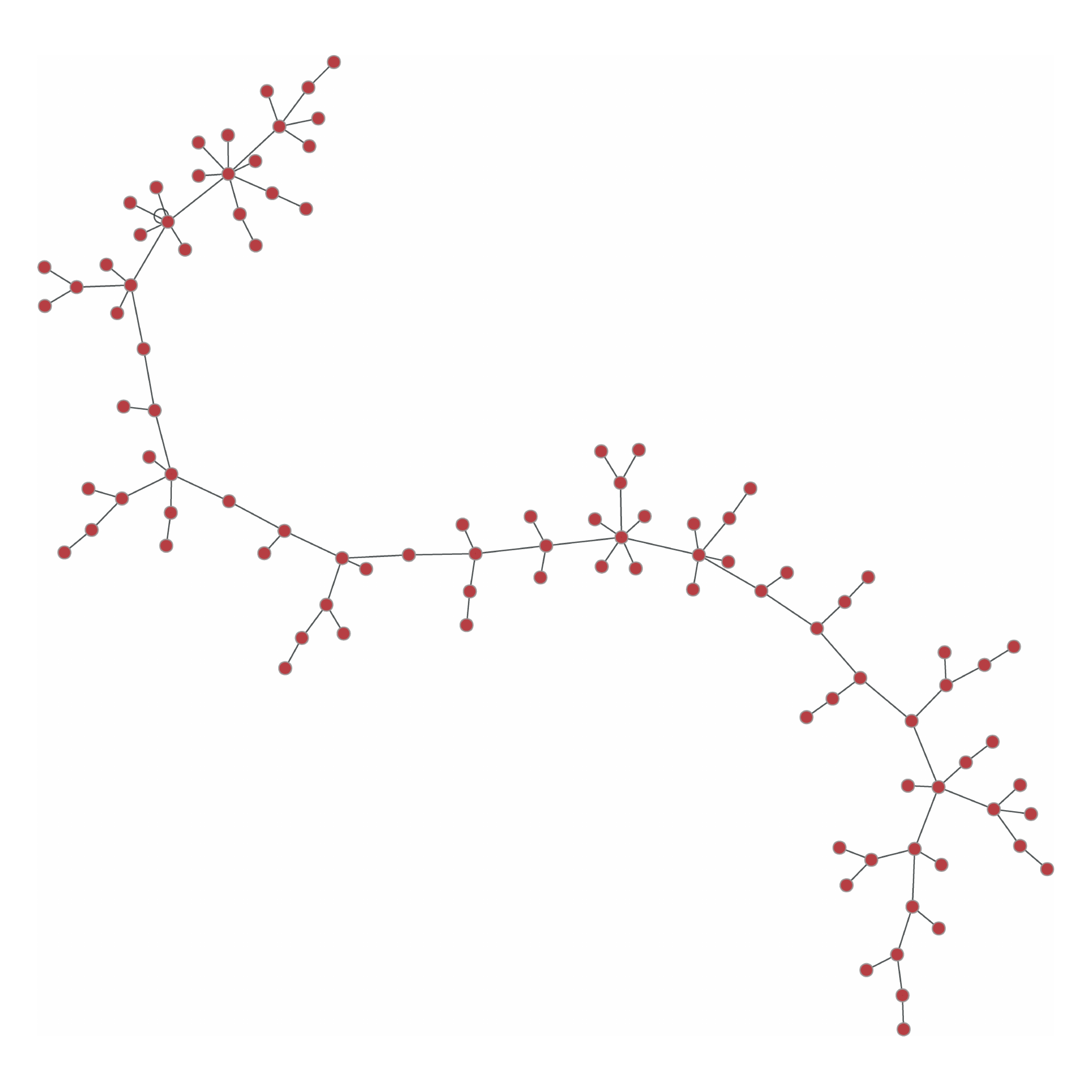}
  & 
 \includegraphics[scale=0.27]{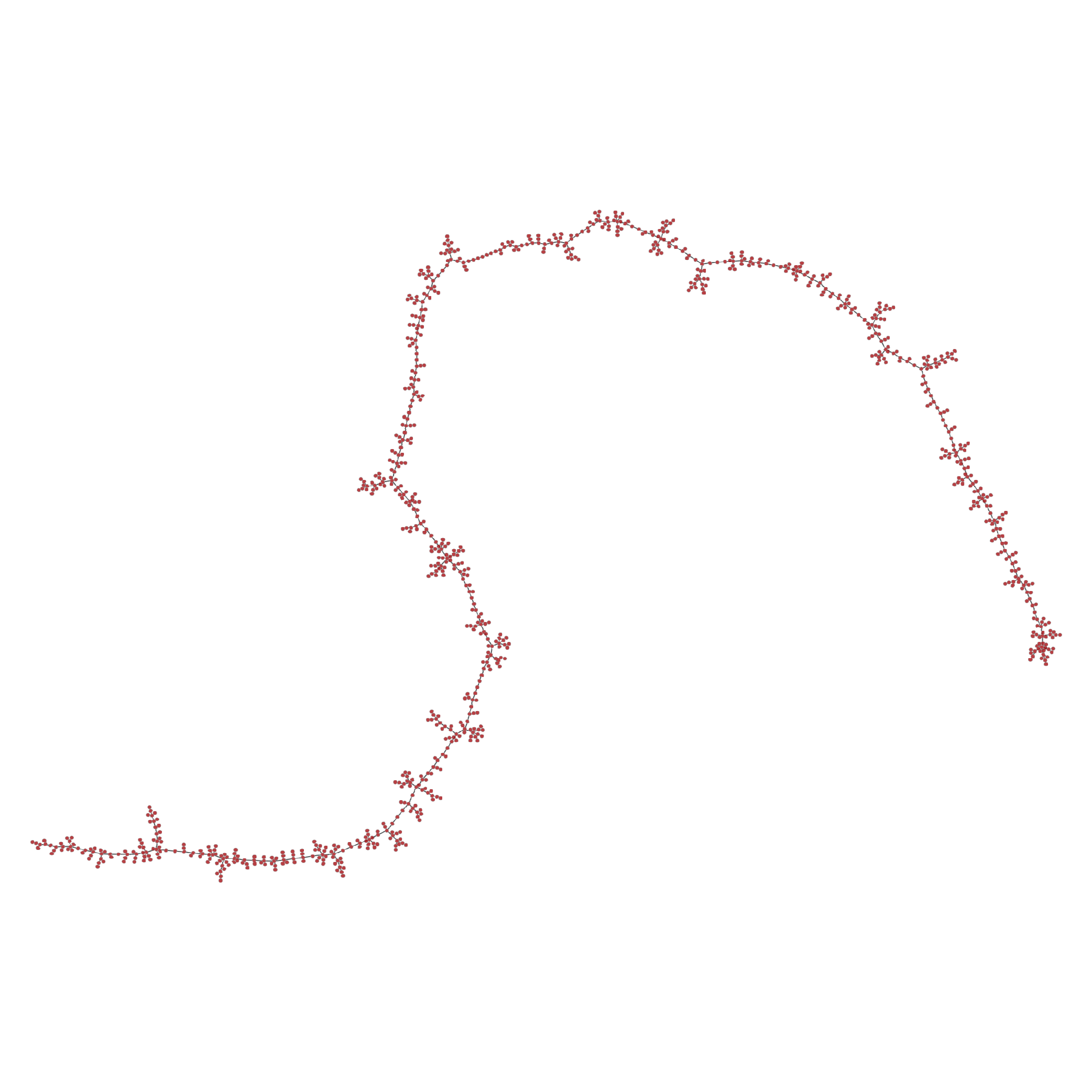} 
  & 
 \includegraphics[scale=0.29]{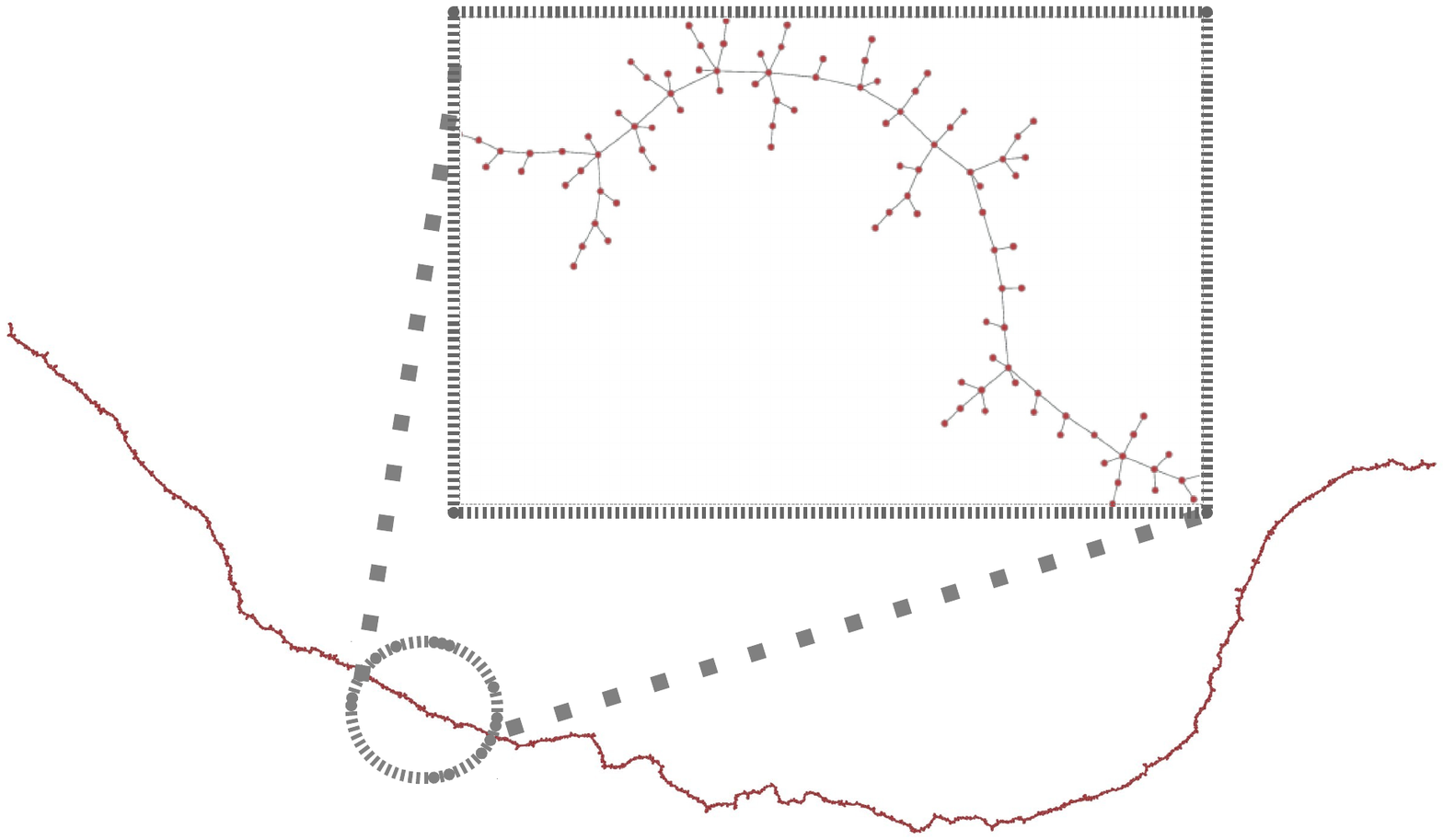}   
  \\
\hline
 {\Large $\s=2$} & 
  \includegraphics[scale=0.27]{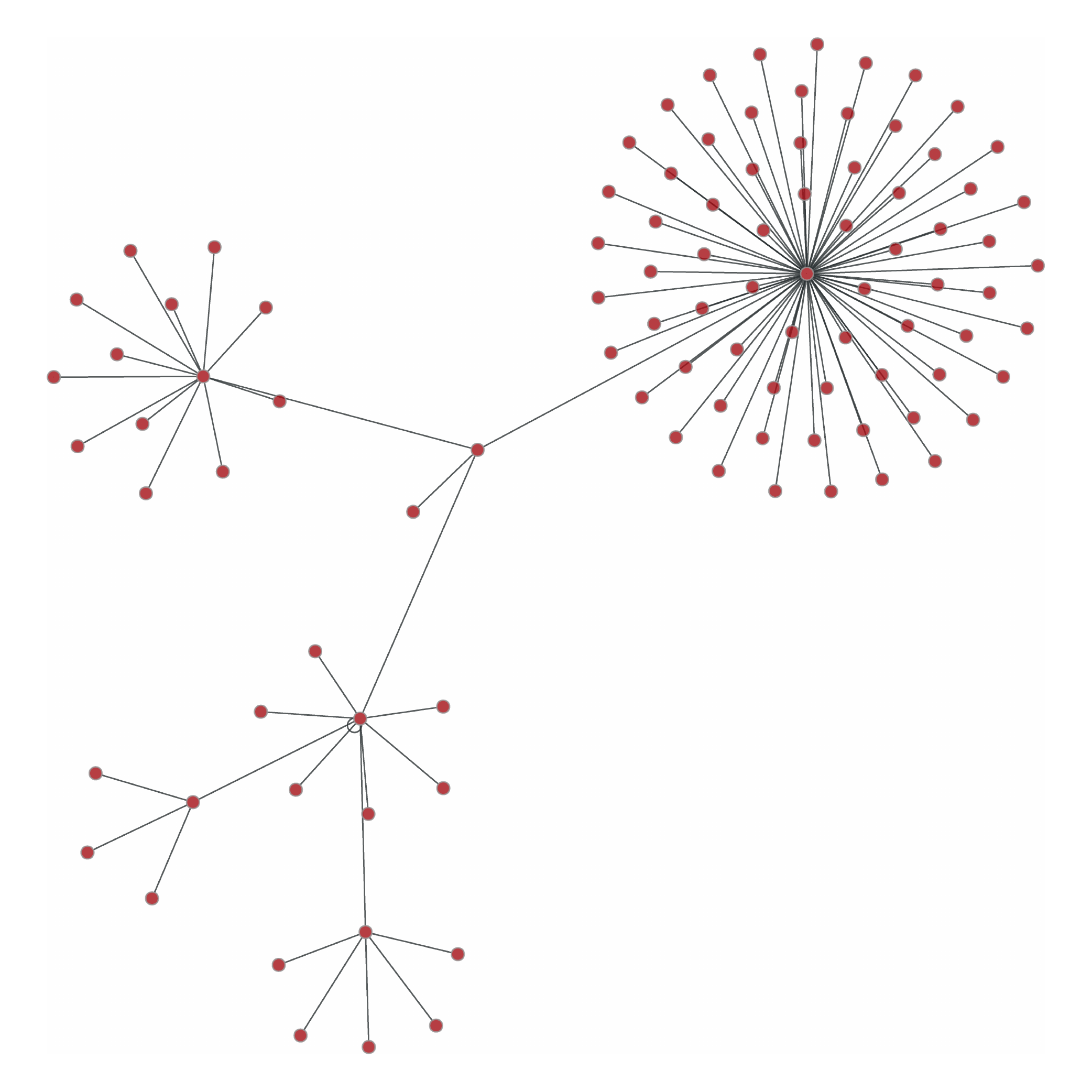}   
 &
  \includegraphics[scale=0.27]{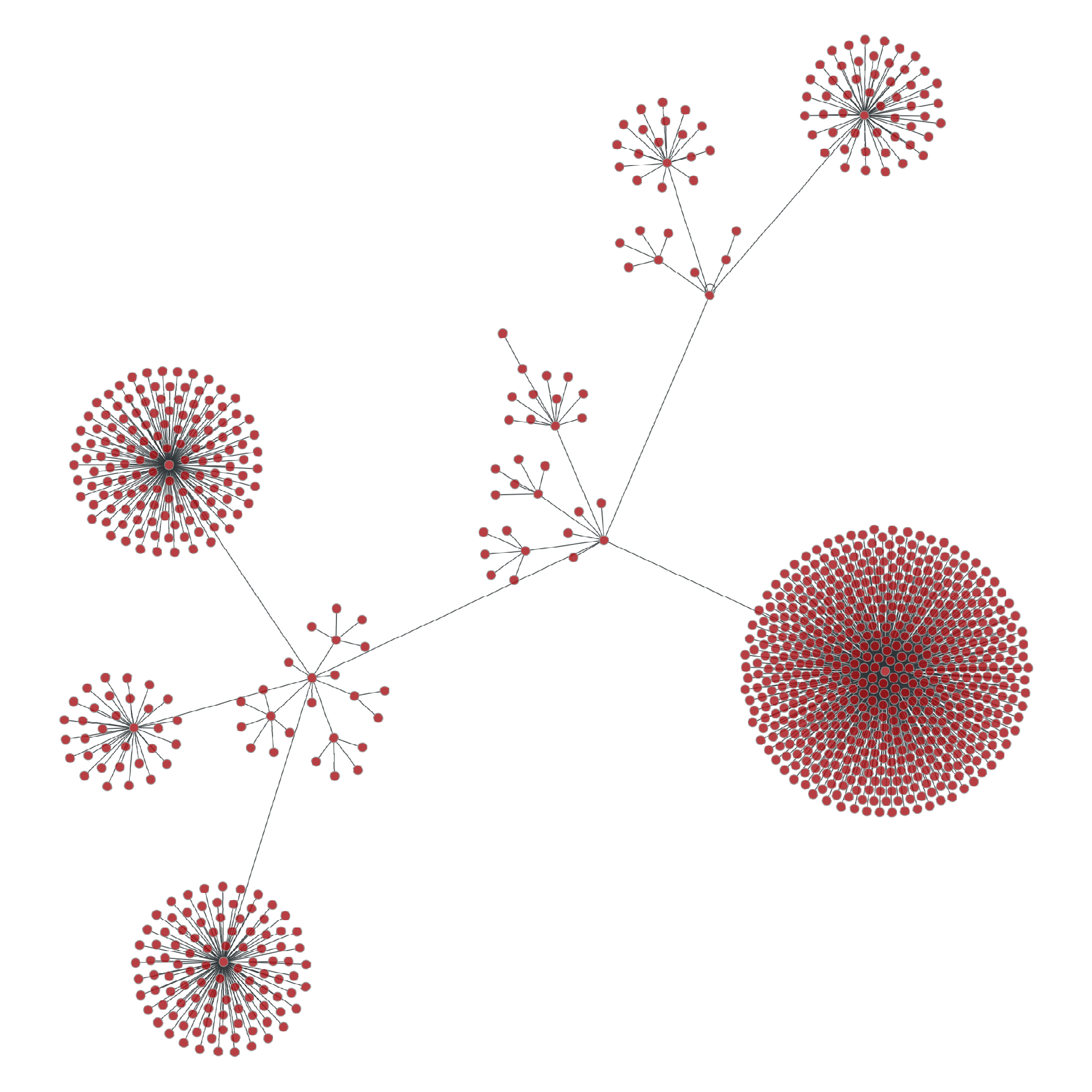}
  &   
 \includegraphics[scale=0.27]{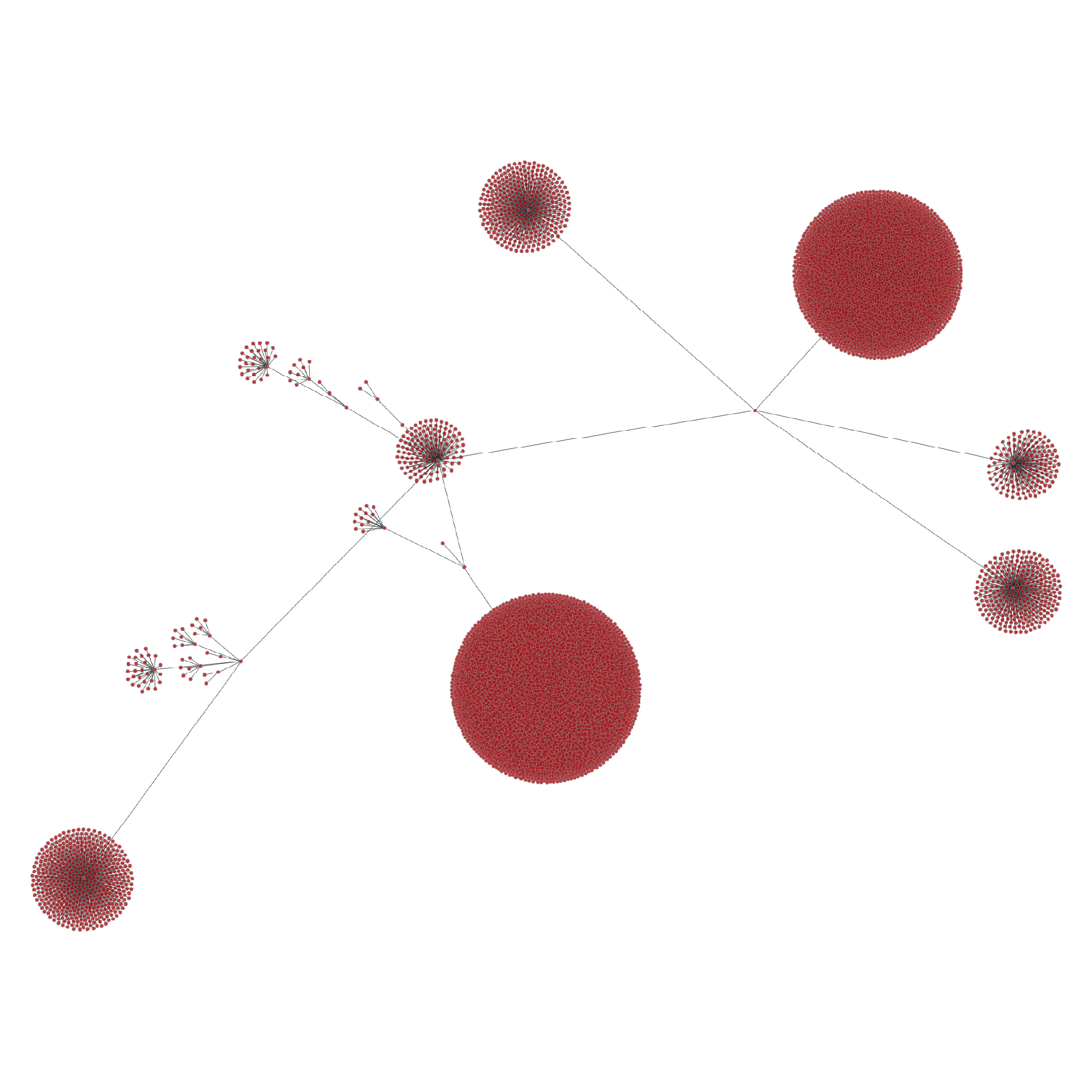}
 \\
\hline
\end{tabular}
}
\caption{Graph  generated by simulating \Name{}  with step parameter $\s=1$ and $\s=2$ for different times,  $t= \s (N-1)$, with  $N= 10^2,10^3, 10^4$.}
\label{fig:network-s}
\end{center}
\end{figure}

Last, various properties of networks generated by \Name{} were first observed by means of extensive numerical simulations~\cite{amorim2016growing}, such as the dichotomy in the degree distribution and distance distribution as a function of the parity of $\s$. In this paper we provide rigorous theoretical treatment for some of the results observed empirically. 

%Last, \Name{} is very related to the Random Walk Model~\cite{saramaki2004scale}, a network growth model that also uses a random walk to take $\s$ steps before connecting a new node. The key difference is that in \cite{saramaki2004scale}, after a new node is connected to the network, the random walk is restarted uniformly at random across all nodes in the network. Our random walk never restarts. Interestingly, the authors of \cite{saramaki2004scale} show (through simulations and approximations) that their model is closely related to the BA model and yields a power law degree distribution independently of $\s$. However, this finding has been questioned and for $\s=1$ it was mathematically proven this not to be the case~\cite{cannings2013random}. Our model and findings contribute to this debate and possibly sheds light on how both results could be reconciled (more on this in Section \ref{sec:conclusion}).

The remainder of the paper is organized as follows. Section \ref{sec:model} presents the model and some notation. Results for $\s=1$ case are presented in Section \ref{sec:s_is_one}, while results for $\s$ even are presented in Section \ref{sec:even_case}. Last, Section~\ref{sec:conclusion} concludes the paper with a brief discussion. 

%TO BE REMOVED:
%
%\begin{itemize}
%\item[$\bullet$] Introduce the \Name{}  as a model in which the random walk and the underlying graph process are strongly intertwined. 
%
%\item[$\bullet$] Mentioned that the model has been already introduced in \cite{amorim2016growing} where, by means of numerical simulation, some of its interesting features have been first detected. In this paper we carry out a more in-depth analysis of the model  and provide formal theoretical results for what we observed in the  simulations. 

%\item[$\bullet$] Highlight the peculiarity of the model and the dependence on the parity of the step parameter which reflects in substantially different properties of the generated graphs.

%\item[$\bullet$] Relate to the already existing models, such as \cite{saramaki2004scale, cannings2013random, vazquez2000knowing} stressing  similarities and differences.  

%\item[$\bullet$] Not sure whether  we should mention in the Introduction  that the model we consider only gives rise to tree structure. \com{Now we mention this in Section~\ref{sec:model}}

%\item[$\bullet$] References to be cited: \cite{dembo2014monotone,dembo2015transience,dembo2014walking}

%\item [$\bullet$] Add discussion about power law distribution in \cite{saramaki2004scale, cannings2013random} that was removed from introduction.
%
%\item[]
%\end{itemize}

%
%!TEX root = main.tex

\section{The Model}\label{sec:model}

The No Restart Random Walk (\Name{}) model consists of a random walk moving on a graph that itself grows over time. At time zero, the network has a single vertex with a self-loop, called the root, with the random walk on it. At every discrete time $t>0$, the random walk takes a step in the current graph. After exactly $\s$ steps, a new vertex with degree one joins the graph, and is attached to the vertex currently occupied by the walker (we assume that the attachment of a new vertex takes zero time). 

Our stochastic process is specified by the pair $\{G^t_\s, W_t\}_{t\geq 0}$. 
 $G^t_\s$ denotes the graph process resulting at time $t$ from the \Name{} with step parameter $\s$: is an undirected graph on the vertex set $V_t = \{j\;, 0\leq j \leq \lfloor t/s \rfloor \}$, where $j$ is the label of the vertex added at time $j \s$. Note that the graph only changes at times $t= k \s$, for every integer $k>0$, while it does not change in the time intervals  $k\s \leq t < (k+1)\s$.  $W_t \in V_t$ denotes the position of the walker at time $t\in \mathbb{N}$. Note that we consider a symmetric random walk, which chooses its next step uniformly at random among the neighbors of its current position. 
Also, since a new vertex is connected to a node in the graph through a single edge, the model always grows trees. Node $0$ is the root of the tree, and we shall also denote it  by $\Root$.
%Although we focus our analysis on the one-vertex-one-edge case, 
%In the conclusion we briefly discuss how the model can be generalized to the case when a new node connecting to the graph creates several  links.

The stochastic process $\{G^t_\s, W_t\}_{t\geq 0}$ is always transient: a given graph can be observed for at most $s$ consecutive steps. Nevertheless, we can define transience and recurrence of the randow walk $W_t$ similarly to what is done for a random walk on a static graph. The walker is recurrent (resp. transient) if it visits any node an infinite number of times with probability $1$ (resp. 0).

It is worthwhile noting that, while we consider as initial graph a single node with a self-loop, the results can be generalized to any initial graph, when the step parameter is odd, and to any not bipartite graph when it is even. The self-loop guarantees that the initial graph is not bipartite. The importance of the self-loop will be further discussed in Section~\ref{sec:change-of-parity}, for the moment we can observe that if we start from a single node, without the self-loop, the model with an even step parameter will trivially give rise to a star graph with the root being the star center.

%!TEX root = main.tex

\section{\Name{}  with step parameter s=1}
\label{sec:s_is_one}

Recall that when $\s=1$, after every walker step a new vertex is added to the graph. 
The graphs shown in Figure~\ref{fig:network-s} (top plots) indicate that in this scenario the trees grow in depth as the number of vertices increases. A more substantial empirical evidence of this phenomenon is provided in~\cite{amorim2016growing}. 
%
%In particular, the growth in depth seems linear on the number of vertices.
This suggests that the random walk is extending the tree to lower depths just never to return to its origins. In other words, the random walk is transient and visits each vertex in the tree only a relative small number of times, with high probability. 
The following theorem captures this intuition.

%\begin{theorem}
%In the \Name{} model with $s=1$, the number of random walk visits to a vertex is stochastically dominated by $1$ plus a geometric random variable with support on $\mathbb{Z}_{\geq 0}$. 
%\label{thm:s_1}
%\end{theorem}

%\newpage

%\com{
\begin{theorem}
In the \Name{} model with $s=1$, the number of random walk visits to a vertex 
%(conditioned on the first visit)
 is stochastically dominated by a geometric random variable and then the  random walk is transient. That is, let $J_i = \sum_{t=1}^\infty \mathbbm{1}(W_t=i)$ denote the number of visits to vertex $i$ and $t' \in \mathbb{N}$, %denote the time of the first visit,
  it holds 
\label{thm:s_1}
\begin{align*}
\mathbb{P} & (J_i \geq k  \mid W_{t'}=i ) \leq \lambda^k\;, \qquad \qquad \mbox{for some }0<\lambda<1\;.\\
\mathbb{P} & (W_t=i \; \text{ for infinitely many $t$ } \mid W_{t'}=i )=0\;.
\end{align*}

\end{theorem}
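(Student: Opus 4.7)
The plan is to bound the per-visit return probability to $i$ uniformly in the graph configuration and then iterate, which will give the geometric tail. The joint state $(G^t_1, W_t)$ is Markov on (graph, position) pairs, so by a strong-Markov argument applied at the successive return times to $i$ it suffices to exhibit $\lambda \in (0,1)$ with
\[
\rho(G,i) \;:=\; \mathbb{P}\bigl(W_t = i \text{ for some } t > t' \,\big|\, G^{t'}_1 = G,\, W_{t'} = i \bigr) \;\leq\; \lambda
\]
uniformly over admissible graphs $G$ containing $i$; the estimate $\mathbb{P}(J_i \geq k \mid W_{t'} = i) \leq \lambda^{k-1}$ and the a.s.\ ``not infinitely often'' statement then follow immediately from $\lambda^k \to 0$.

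Next I would reformulate $\rho(G,i)$ as an escape probability. After its first step the walker sits at some neighbor $j$ of $i$ with a freshly attached leaf, and because the graph is a tree it is confined to the component $S$ of $T \setminus \{i\}$ containing $j$ and can re-enter $i$ only through the edge $\{j,i\}$. The key bookkeeping observation is that every subsequent visit of the walker to $j$ from inside $S$ attaches another leaf to $j$, so at the $\ell$-th such visit the current degree satisfies $d_j \geq \ell + 1$, and the probability that the next step goes to $i$ is at most $1/(\ell+1)$. A Borel--Cantelli--style estimate (using $\prod_{\ell \geq 1}(1 - 1/(\ell+1)) = 0$) then shows that on the event ``$j$ is visited infinitely often from inside $S$'' the walker returns to $i$ almost surely. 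Hence
\[
1 - \rho(G,i) \;\geq\; \mathbb{P}\bigl(\text{walker visits $j$ only finitely often inside } S\bigr),
\]
and the task reduces to lower-bounding this escape probability by a constant independent of $G$.

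The main obstacle is exactly this uniform escape bound. My intended route is a drift argument on $D_t := \mathrm{dist}(W_t, i)$. In a tree, exactly one of $W_t$'s neighbors lies on the path to $i$, and since a leaf has just been attached to the walker's vertex it always has degree at least $2$; a direct calculation then yields the non-negative submartingale drift
\[
\mathbb{E}\bigl[D_{t+1} - D_t \,\big|\, W_t = v \bigr] \;=\; \frac{d_v - 2}{d_v} \;\geq\; 0,
\]
strictly positive whenever $d_v \geq 3$. Because every revisit to a vertex strictly increases its degree, zero-drift (degree-$2$) episodes cannot accumulate at any single vertex, so $D_t$ should be coupleable from below with a biased birth--death chain on $\mathbb{Z}_{\geq 0}$ whose classical transience series $\sum_n \prod_{k \leq n} q_k/p_k$ converges. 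Turning this comparison into a \emph{uniform-in-$G$} lower bound on $\mathbb{P}(D_t \to \infty)$, and thereby closing the loop to $\rho(G,i) \leq \lambda < 1$, is the technical core of the proof.
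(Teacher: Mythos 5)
The core of your argument is missing, and the route you sketch for it does not work as stated. Everything hinges on the uniform escape bound $\rho(G,i)\leq\lambda<1$, which is equivalent to transience, and you explicitly defer it; but the one-step drift computation you propose cannot deliver it. Indeed $\mathbb{E}[D_{t+1}-D_t\mid W_t=v]=(d_v-2)/d_v$ vanishes whenever $d_v=2$, and degree-$2$ vertices are not an exceptional episode in this model: each time the walker steps onto a freshly created leaf, that leaf immediately receives a new child and has degree exactly $2$ at the moment of the next decision, so the walker can spend a positive fraction of its time making zero-drift moves (at ever new vertices, so the remark that ``zero-drift episodes cannot accumulate at any single vertex'' does not create any per-step bias). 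A nonnegative-drift (submartingale) comparison alone does not give transience, and no coupling with a biased birth--death chain is available from these one-step estimates. The paper's proof closes exactly this gap by passing to the two-step level process: if the walker moves one level toward the root, the vertex it lands on has just received a new edge and hence has degree at least $3$, so the probability of decreasing the level by $2$ in two steps is at most $\tfrac12\cdot\tfrac13=\tfrac16$, while the probability of increasing it by $2$ is at least $\tfrac12\cdot\tfrac12=\tfrac14$; these bounds hold uniformly in the history, which permits a monotone coupling with a homogeneous biased lazy walk on $2\mathbb{Z}_{\geq 0}$ whose visits to $0$ are geometric. Your outer scaffolding (strong Markov at return times, rerooting at the vertex of interest) matches the paper's final step, but without a two-step (or comparable) argument the uniform bound $\lambda<1$ is not established.

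Two secondary points. First, the inequality $1-\rho(G,i)\geq\mathbb{P}(\text{walker visits }j\text{ only finitely often inside }S)$ is in the wrong direction: visiting $j$ finitely often does not prevent one of those visits from stepping into $i$; the Borel--Cantelli-type argument (which needs a conditional version, with a lower bound $1/(d_0+\ell)$ on the per-visit probability, not the upper bound $1/(\ell+1)$ you quote) gives the reverse inclusion $\{\text{never return to }i\}\subseteq\{\text{finitely many visits to }j\}$ up to null sets, so this reduction does not lower-bound the escape probability and is in any case not needed once transience of the distance process is proved. Second, your claim $\mathbb{P}(J_i\geq k\mid W_{t'}=i)\leq\lambda^{k-1}$ is the right shape, but note the theorem's bound $\lambda^k$ is obtained in the paper by dominating the number of returns after the first visit by a geometric variable supported on $\mathbb{Z}_{\geq 0}$; this bookkeeping is easy, but it still presupposes the uniform return-probability bound you have not proved.
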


%\begin{theorem}
%In the \Name{} model with $s=1$, the random walk is transient and the number of random walk visits to a vertex (conditioned on the first visit) is stochastically dominated by a geometric random variable with support on $\mathbb{Z}_{\geq 0}$. That is, 
%\label{thm:s_1}
%\begin{itemize}
%\item[] Let $t' \in \mathbb{N}$ denote the first time the random walk visits vertex $i$:
%\[
%\mathbb{P} (W_t=i \; \text{ for infinitely many $t$ } \mid W_{t'}=i )=0\;.
%\]
%\item[] If $J_i = \sum_{t=1}^\infty \mathbbm{1}(W_t=i)$ denotes the number of visits to vertex $i$
%\[
%\mathbb{P} (J_i \geq k  \mid W_{t'}=i ) \leq \lambda^k\;, \qquad \qquad 0<\lambda<1\;.
%\]
%\end{itemize}
%\end{theorem}
%}

A theorem with a similar statement appeared in \cite{amorim2016growing}, but we restate the claim and proof here for clarity and for completeness.
The proof uses a coupling argument to show that conditioned on the random walk being in a vertex $i$ at a given time $t_0$, the number of visits to $i$ for $t > t_0$ is stochastically dominated by the number of visits a biased random walk on $\mathbb{Z}_{\geq 0}$ makes to the origin (conditioned on starting at the origin). The latter is known to be a geometric random variable (supported on $\mathbb{Z}_{\geq 0}$) with  parameter $1 - f_0$, where $f_0$ denotes  the probability that the first return time to the origin is finite. The biased random walk on $\mathbb{Z}_{\geq 0}$ is transient and, thus $f_0<1$.

\begin{proof}
We consider here that the initial graph consists of a single vertex with no self-loop. This simplifies the notation and does not compromise the main argument.
Let $r$ denote the initial vertex (the root) of the growing graph 
%(at any step the growing network is a tree)
where the walker resides at time zero. Note that $r$ is the only vertex at level zero. 
Let $X_n$ be the level (i.e.~the distance from the root) of the vertex where the walker resides after taking $n>0$ steps. 
We call the process $\{X_n, n \in \mathbb{Z}_{\ge 0}\}$ the \emph{level process}. 
Note that the random walk visits $r$ the same number of times that the level process visits level zero. 
At step $n>0$ the RW is in a vertex $v_n$ with at least two edges: the one the RW has arrived from and the new one added as a consequence of the RW's arrival to that vertex (recall that for $\s=1$ after every walker step a new vertex is attached to the graph). Let $d_n \ge2$ denote the degree of vertex $v_n$. If $v_n \neq r$, the RW moves from $v_n$ to a vertex with a larger level with probability 
$\frac{d_n-1}{d_n}\ge \frac{1}{2}$ and with the complementary probability $\frac{1}{d_n}\le \frac{1}{2}$ to a vertex with smaller level (i.e., the parent of $v_n$ in the tree). If $v_n=r$, then  the level can obviously only increase.
Note that the nodes' degrees keep changing due to the arrival of new nodes (edges), and therefore the level process is non-homogeneous (both in time and in space). 

We now study the evolution of the level process every two walker steps, i.e.~we consider the process $Y_n \triangleq X_{2n}$. Given that the network is a tree and $X_0=0$, the two-step level process can be seen as a non-homogeneous `lazy' random walk on $2\mathbb{Z}_{\ge 0} = \{0,2,4,\ldots\}$.
We denote by $p_{k,h}(n)$ the probability that the level at step $n+1$ is $h$ conditioned on the fact that it is $k$ at step $n$. Although the notation hides it, we observe that  the probabilities $p_{k,h}(n)$ depend on the whole history of the RW until step $n$. The two-step level process will provide a bound to the transition probabilities $p_{k,h}(n)$ that allows a simple coupling with a homogeneous (and biased) random walk. 
% 
%As mentioned before, the advantage of considering the two-step level process is that we can get suitable bounds on  $p_{k,h}(n)$. 
%
%
The above bounds for $X_n$ lead immediately to conclude that $p_{k,k+2}(n)\ge\frac{1}{2} \frac{1}
{2}=\frac{1}{4}$ for any level $k \geq 0$ and $p_{k,k-2}(n)\le \frac{1}{2} \frac{1}{2}=\frac{1}{4}$ for $k\ge 2$, but we can get a tighter bound for  $p_{k,k-2}(n)$. If the RW is at level $k$, all the vertices on the path between its current position and the root $r$ have degree at least $2$. If it then moves to vertex $v$ at level $k-1$, a new edge is attached to $v$, whose degree is now at least $3$. The probability to move from $v$ further closer to the root to a vertex with level $k-2$, is then at most $\frac{1}{3}$. It follows then that $p_{k,k-2}(n)\le \frac{1}{2} \frac{1}{3}=\frac{1}{6}$ for $k\ge 2$. 

We consider now a homogeneous biased lazy random walk $(Y^*_n)_{n\geq 0}$ on $2\mathbb{Z}_{\ge 0}$ starting from $0$ with transition probabilities $p^*_{k,k+2}=\frac{1}{4}$ for all $k \in 2\mathbb{Z}_{\ge 0}$ and $p^*_{k,k-2}=\frac{1}{6}$ for $k \in 2\mathbb{Z}_{\ge 0}$ and $k\neq 0$. We show that if $(Y^*_n)_{n\geq 0}$ also starts in $0$ ($Y^*_0=0$), it is stochastically dominated by $(Y_n)_{n\geq 0}$. We prove it by coupling the two processes as follows.
Let $(\omega_n)_{n\geq 0}$ be a sequence of independent uniform random variables over $[0,1]$. We use them to generate sample paths for both processes $(Y_n)_{n\geq 0}$ and $(Y^*_n)_{n\geq 0}$ as follows:
\[
\begin{aligned}
    Y_{n+1}= 
\begin{cases}
    Y_n-2,& \text{if } \omega_n \in [0,p_{k,k-2}(n))\\
    Y_n+2,& \text{if } \omega_n \in [1-p_{k,k+2}(n),1]\\
    Y_n              & \text{otherwise}
\end{cases}
\end{aligned}
\;\;\;
\begin{aligned}
    Y^*_{n+1}= 
\begin{cases}
    Y^*_n-2,& \text{if } \omega_n \in [0,p^*_{k,k-2})\\
    Y^*_n+2,& \text{if } \omega_n \in [1-p^*_{k,k+2},1]\\
    Y^*_n              & \text{otherwise}
\end{cases}
\end{aligned}
\]
where $p_{k,k-2}(n)$ and $p^*_{k,k-2}$ are $0$ if $k=2$.
We start observing that if $Y_n$ and $Y^*_n$ have the same value $k$, then every time $Y^*_n$ increases also $Y_n$ increases because $p^*_{k,k+2}=\frac{1}{4}\le p_{k,k+2}(n)$. On the contrary if $Y^*_n$ decreases (as it can happen only for $k\ge2$), then $Y_n$ may decrease or not because $p_{k,k-2}(n)\le \frac{1}{6}=p^*_{k,k-2}$. It follows that if $Y_n$ and $Y^*_n$ are at the same level, then $Y^*_{n+1} \le Y_{n+1}$. 

We now prove by induction on $n$ that $Y^*_{n+1} \le Y_{n+1}$ for every $n$.
With a slight abuse of terminology we say that $Y_n$ increases (resp. decreases) if $Y_{n+1}>Y_n$ (resp. $Y_{n+1}<Y_n$).
We start observing that indeed $Y^*_0 \le Y_0$, because both  processes start in $0$.
Let us assume that $Y^*_n=h \le k= Y_n$. For all values of $h$, every time $Y^*_n$ increases also $Y_n$ increases because $p^*_{k,k+2}=\frac{1}{4}\le p_{k,k+2}(n)$ and then $Y^*_{n+1}=h+1 \le k+1=Y_{n+1}$.
If $h\ge 2$, then  $p^*_{h,h-2}=\frac{1}{6}\ge p_{k,k-2}(n)$ and if $Y_n$ decreases then $Y^*_n$ must also decrease ($Y^*_{n+1}=h-1 \le k-1=Y_{n+1}$). It follows that for $h \ge 2$ then $Y^*_{n+1} \le Y_{n+1}$.
The only case when $Y_n$ may decrease without $Y^*_n$ decreasing is when $h=0$ and $k \neq 0$, but in this case $Y^*_{n+1} =0$ and $Y_{n+1} \geq 0$. This proves that $Y^*_{n+1} \le Y_{n+1}$ for every $n$.

%The above coupling implies that  $Y^*_n$ is stochastically dominated by $Y_n$, i.e., 
%$\mathbb{P}(Y^*_n \leq k)\geq \mathbb{P}(Y_n \leq k)$, for every $k$.
%
Given that $Y_n^* \le Y_n$ and both processes start at level zero, the number of visits of $(Y_n)_{n\geq 0}$ to level zero is bounded by the number of visits of $(Y^*_n)_{n\geq 0}$ to level zero. The homogeneous biased lazy random walk $(Y_n^*)_{n\geq 0}$ is transient since
$p^*_{k,k+2}=1/4 > p^*_{k,k-2}=1/6$. Thus, if $f_0$ denotes the probability that the first return time to level 0 is finite, it holds  $f_0<1$. 
By the strong Markov property, the number of visits to level 0 is geometrically distributed on the set $\mathbb{Z}_{\geq 0}$ with parameter equal to $1 - f_0$. Since a visit to level zero in $(X_n)_{n\geq 0}$ (one level process) implies a visit to level zero in $(Y_n)_{n\geq 0}$ (two level process), then it follows that the number of visits of $(X_n)_{n\geq 0}$ to level zero is bounded by a geometric random variable and then even more so by $1$ plus the same geometric random variable.

Now let us consider any vertex $v$ in the growing graph. If the RW never visits $v$, then the number of visits is $0$ (the degree of $v$ is $1$) and the thesis follows immediately. Otherwise,  consider the first time the RW visits $v$ to be time $t=0$ and  consider $v$ to be the root of the current tree. We can retrace the same reasoning and conclude that the number of visits to $v$ for $t>0$ is bounded by a geometric random variable on $\mathbb{Z}_{\geq 0}$ with parameter equal to $1 - f_0$. Then the total number of visits to $v$ is bounded by $1$ plus such random variable. This concludes the proof.
%If the initial graph is a vertex with a self-loop,  the process $Y_n$ can assume all the positive integer values and transitions from $Y_n=0$ to $Y_{n+1}=0$ (and vice versa) are possible. The notation becomes then more complex, but the proof follows the same lines.
\qed
\end{proof}

\begin{corollary}
In \Name{} with $s=1$, the degree distribution of any vertex, conditioned on the random walk visiting the vertex at least once, is bounded above by a geometric distribution. 
\label{cor:s_1}
\end{corollary}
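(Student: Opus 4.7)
The plan is to exploit the direct correspondence between walker visits and degree growth that is special to the $s=1$ regime: since after every step a new leaf is attached to the walker's current position, every visit of the walker to a vertex $v$ increases $\deg(v)$ by exactly one. If $t_v$ denotes the time at which $v$ is added to the tree, this yields the identity
\[
\deg(v) \;=\; d_0(v) + \bigl|\{\,t > t_v : W_t = v\,\}\bigr|,
\]
where $d_0(v) = 1$ for every non-root vertex (the edge to its parent in the tree) and $d_0(v) = 0$ for the root.

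Next I would condition on the event that $v$ is visited at least once after $t_v$ and let $\tau$ denote the first such visit. Invoking Theorem~\ref{thm:s_1}, or more precisely its last paragraph, where the same coupling argument is repeated after relabelling $v$ as the root of a new growing tree starting at time $\tau$, the number of visits to $v$ at times $t \geq \tau$ is stochastically dominated by $1 + G$, with $G$ a geometric random variable on $\mathbb{Z}_{\geq 0}$ of parameter $1 - f_0 \in (0,1)$. Substituting this bound into the degree identity gives
\[
\deg(v) \;\leq\; d_0(v) + 1 + G,
\]
and the shift of a geometric variable by a deterministic constant remains bounded above by a geometric distribution, which is exactly the claim.

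The only delicate point is the use of Theorem~\ref{thm:s_1} at the random time $\tau$ rather than at a deterministic $t'$. This is already handled by the proof of the theorem: the key bounds $p_{k,k+2}(n) \geq 1/4$ and $p_{k,k-2}(n) \leq 1/6$ on the two-step level process hold uniformly in the entire history and in $n$, so the coupling with the homogeneous biased lazy walk $(Y^*_n)$ can be restarted at any time at which $v$ is at level zero in its own subtree, in particular at $\tau$. Consequently the corollary follows immediately from Theorem~\ref{thm:s_1} without any further probabilistic input.
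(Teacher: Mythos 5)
Your proposal is correct and follows essentially the same route as the paper: it uses the fact that for $s=1$ each visit adds exactly one leaf to the visited vertex, so the degree equals the initial degree plus the number of visits, and then invokes the geometric domination of the visit count from Theorem~\ref{thm:s_1} (restarted at the first visit, exactly as in the last paragraph of that theorem's proof). No substantive difference from the paper's argument.
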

This follows because when $\s=1$ a new vertex is added after every walker step. Thus, the degree of a vertex is equal to $1$ plus the number of visits the random walk makes to the vertex. After the random walk visits a vertex for the first time, the number of subsequent  visits is stochastically dominated by a geometric random variable with support on $\mathbb{Z}_{\geq 0}$.

%\medskip
%
%\com{Comment on the fact that we believe similar behaviour occurs for any $\s$ odd and refer to our simulation in \cite{amorim2016growing}.}

%!TEX root = main.tex

\section{\Name{} with even step parameter}\label{sec:even_case}

In this section we focus on the behavior of \Name{} for  $s$  even, and show some fundamental difference with the case $\s=1$. In particular, we prove the following results:
\begin{itemize}

%\item[$\bullet$] The probability of adding $k$ consecutive new vertices to the same pre-existing vertex goes to zero faster than $1/\sqrt{k}$. \com{this result is minor maybe  mention it without emphasis...}

\item[-] The random walk is recurrent, i.e., visits every vertex of  the graph infinitely often almost surely. 

\item[-] The degree distribution of a vertex with degree at least two is lower bounded by a power law. 

\item[-] The fraction of leaves is asymptotically lower bounded by a constant. In particular, for $\s=2$ the fraction of leaves goes to $1$.

%\item[$\bullet$] \com{...The depths of the tree goes to infinity almost surely?} 
\end{itemize}

An important difference between the case $\s$ even and  $\s$ odd is that, when $\s$ is even, two consecutive vertices can be connected to the same vertex, something not possible with $s=1$ (or $s$ odd, in general), but for the case of the root. % due to the odd parity.  
 Moreover, the more vertices are consecutively  connected to vertex $i$, the higher is the probability that the next vertex will also be connected to $i$. This produces what we call the \emph{bouncing-back effect} which increases the probability of returning to $i$ after an even number of steps. 
Assume, for example, that $\s=2$ and that $M$ new vertices have been consecutively connected to vertex $i$. All the $M$ newly added vertices are leaves and therefore as soon as the walker visits anyone of them (which happens with probability proportional to $M$), it must return to vertex $i$ in the next step, thus adding a further leaf to $i$. 
The bouncing-back effect is the fundamental difference between the dynamics of $\s$ even and odd. Last, we note that this effect is related to ``cumulative advantage" or ``rich-gets-richer" effects, since more resources an agent has (leaves of a node, in our case) the easier it becomes to accumulate further resources.

\subsection{Degree distribution of a vertex with $\s$ even}\label{sec:power-law}
We have seen in Corollary~\ref{cor:s_1} that for $\s=1$ the degree distribution of non-leaf vertices (i.e., vertices having degree greater than  one) is bounded from \emph{above} by a geometric distribution. In sharp contrast, we show that for every $\s$ even the degree distribution of non-leaf vertices is bounded from \emph{below} by a power-law distribution whose exponent depends on $\s$.
The dichotomy between the exponential tail and the heavy tail for the degree distribution in \Name{} for $\s$ odd and $\s$ even was already empirically observed in simulations~\cite{amorim2016growing}.

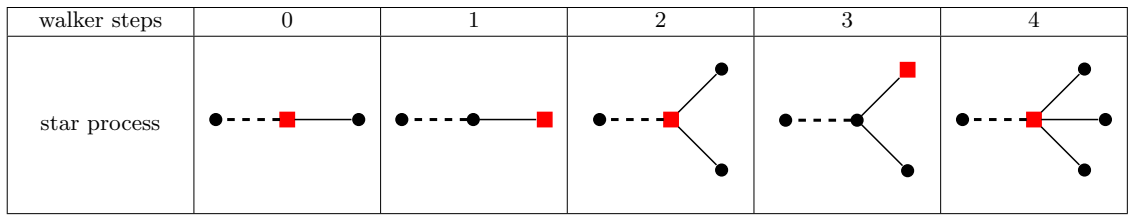
\begin{figure}
\begin{center}
\scalebox{0.95}{
\begin{tabular}{|m{2.5cm}| m{2.5cm}|m{2.5cm}|m{2.5cm}|m{2.5cm}|m{2.5cm}|}
\hline
\centering
{\rm walker steps} &  \centering $0$ &  \centering $1$ & \centering $2$ & \centering $3$ & \centering $4$ \cr 
 \hline
 \centering
{\rm star process}
&
\begin{center}
\begin{tikzpicture}[scale=1, transform shape, shorten >=0.5pt,node distance=5cm, semithick]

\tikzstyle{every state}=[scale=0.2,draw, fill]

\node[state,minimum size=10mm,shape=rectangle,
draw, fill, red] (1b)  {};
\node[state] (2b) [left of=1b] {};
\node[state] (3b) [right of=1b] {};

% % % Edges % % % % 

\draw[very thick , dashed] (1b) -- (2b);
\draw (1b) -- (3b);
\end{tikzpicture}
\end{center}
&
\begin{center}
\begin{tikzpicture}[scale=1, transform shape, shorten >=0.5pt,node distance=5cm, semithick]

\tikzstyle{every state}=[scale=0.2,draw, fill]

\node[state] (1b)  {};
\node[state] (2b) [left of=1b] {};
\node[state,minimum size=10mm,shape=rectangle,
draw, fill, red] (3b) [right of=1b] {};

% % % Edges % % % % 

\draw[very thick , dashed] (1b) -- (2b);
\draw (1b) -- (3b);
\end{tikzpicture}
\end{center}
&
\begin{center} 
\begin{tikzpicture}[scale=1, transform shape, shorten >=0.5pt,node distance=5cm, semithick]

\tikzstyle{every state}=[scale=0.2,draw, fill]

\node[state,minimum size=10mm,shape=rectangle,
draw, fill, red] (1b)  {};
\node[state] (2b) [left of=1b] {};
\node[state] (3b) [above right of=1b] {};
\node[state] (4b) [below right of=1b] {};

% % % Edges % % % % 

\draw[very thick, dashed] (1b) -- (2b);
\draw (1b) -- (3b);
\draw (1b) -- (4b);
\end{tikzpicture}
\end{center}
&
\begin{center} 
\begin{tikzpicture}[scale=1, transform shape, shorten >=0.5pt,node distance=5cm, semithick]

\tikzstyle{every state}=[scale=0.2,draw, fill]

\node[state] (1b)  {};
\node[state] (2b) [left of=1b] {};
\node[state,minimum size=10mm,shape=rectangle,
draw, fill, red] (3b) [above right of=1b] {};
\node[state] (4b) [below right of=1b] {};

% % % Edges % % % % 

\draw[very thick, dashed] (1b) -- (2b);
\draw (1b) -- (3b);
\draw (1b) -- (4b);
\end{tikzpicture}
\end{center}
&
\begin{center}
\begin{tikzpicture}[scale=1, transform shape, shorten >=0.5pt,node distance=5cm, semithick]

\tikzstyle{every state}=[scale=0.2,draw, fill]

\node[state,minimum size=10mm,shape=rectangle,
draw, fill, red] (1b)  {};
\node[state] (2b) [left of=1b] {};
\node[state] (3b) [right of=1b] {};
\node[state] (4b) [above right of=1b] {};
\node[state] (5b) [below right of=1b] {};
% % % Edges % % % % 

\draw[very thick, dashed] (1b) -- (2b);
\draw (1b) -- (3b);
\draw (1b) -- (4b);
\draw (1b) -- (5b);
\end{tikzpicture}
\end{center}
%&
%\begin{center}
%\begin{tikzpicture}[scale=1, transform shape, shorten >=0.5pt,node distance=5cm, semithick]
%
%\tikzstyle{every state}=[scale=0.2,draw, fill]
%
%\node[state,minimum size=13mm,shape=rectangle,
%draw, fill, red] (1b)  {};
%\node[state] (2b) [left of=1b] {};
%\node[state] (3b) [right of=1b] {};
%\node[state] (4b) [above right of=1b] {};
%\node[state] (5b) [below right of=1b] {};
%\node[state] (6b) [below  of=1b] {};
%% % % Edges % % % % 
%
%\draw[very thick, dashed] (1b) -- (2b);
%\draw (1b) -- (3b);
%\draw (1b) -- (4b);
%\draw (1b) -- (5b);
%\draw (1b) -- (6b);
%\end{tikzpicture}
%\end{center}
\\
\hline 
\end{tabular}
}
\caption{Illustration of configurations from the star growing process used to bound the degree distribution of non-leaf vertices in  \Name{}  with even step-parameter (Proposition~\ref{pro:degree-power_law}). The dashed edge represents the edge to the parent node, solid edges represent edge to leaves, and the red (squared) vertex represents the  walker position at the corresponding time.}
\label{fig:star-process}
\end{center}
\end{figure}

\begin{proposition}\label{pro:degree-power_law}
Let $\s$ be even and assume that $T_j$ is the first time a vertex is connected to vertex $j$.  
Then, for every time $t \geq T_j$ and for every $k \in \{1, \dots, \lfloor \frac{t-T_j}{\s}\rfloor +1\}$  we have that
\begin{align*}
&\mathbb{P}\left(d_{t}(j)\geq k+1  \mid T_j < \infty\right)  \geq
k^{-\s/2}\;, && \text{ if $j \neq $ root} \:, \\
& \\
&\mathbb{P}\left(d_{t}(j)\geq k+2\right)  \geq \left( \frac{k(k+1)}{2}\right)^{-\s/2}\;, && \text{ if $j = $ root}\;,
\end{align*}
where $d_t(j)$ denotes the degree of vertex $j$ at time $t$.
\end{proposition}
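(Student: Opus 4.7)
The idea is to isolate one very simple path — the ``leaf-bouncing'' path depicted in Figure~\ref{fig:star-process} — and lower bound the probability that the walker follows it between consecutive vertex additions. Let $t_m := T_j + ms$ for $m \geq 0$. By definition of $T_j$, on $\{T_j < \infty\}$ the walker sits at $j$ at time $t_0 = T_j$ and a first leaf has just been attached to $j$. For $m \geq 1$ set $B_m := \{W_{t_m} = j\}$. On $B_1 \cap \cdots \cap B_{m-1}$, each of the $m-1$ additions at $t_1, \ldots, t_{m-1}$ attaches a leaf to $j$, so at time $t_{m-1}$ vertex $j$ has $m$ leaf-children and walker-degree $m+1$ if $j \neq \Root$, or $m+2$ if $j = \Root$ (counting the self-loop twice).

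Since no vertex is added in the open interval $(t_{m-1}, t_m)$, the tree is frozen during those $s$ steps. To lower bound $\mathbb{P}(B_m)$, I keep only the leaf-bouncing path of Figure~\ref{fig:star-process}: starting at $j$, the walker picks one of its $m$ leaves (probability $m/\deg(j)$), and since a leaf has degree $1$ it deterministically returns to $j$; this round trip is then repeated $s/2$ times — here is where the parity of $s$ is used. By the Markov property of $\{(G^t_\s, W_t)\}_{t\geq 0}$ at $t_{m-1}$, and since the probability of this path is determined only by $j$'s local neighborhood (a parent or self-loop plus $m$ leaves) which is fully prescribed by the conditioning,
\begin{align*}
\mathbb{P}\bigl(B_m \,\big|\, B_1, \ldots, B_{m-1}, T_j < \infty\bigr) \geq \begin{cases} \left(\dfrac{m}{m+1}\right)^{s/2} & \text{if } j \neq \Root, \\[4pt] \left(\dfrac{m}{m+2}\right)^{s/2} & \text{if } j = \Root. \end{cases}
\end{align*}

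Chaining these inequalities through the chain rule and telescoping the resulting products yields
\begin{align*}
\mathbb{P}\Bigl(\bigcap_{\ell=1}^{k-1} B_\ell \,\Big|\, T_j < \infty\Bigr) \geq \begin{cases} \displaystyle\prod_{m=1}^{k-1}\left(\dfrac{m}{m+1}\right)^{\!s/2} = k^{-s/2} & \text{if } j \neq \Root, \\[4pt] \displaystyle\prod_{m=1}^{k-1}\left(\dfrac{m}{m+2}\right)^{\!s/2} = \left(\dfrac{2}{k(k+1)}\right)^{\!s/2} & \text{if } j = \Root. \end{cases}
\end{align*}
On $\bigcap_{\ell=1}^{k-1} B_\ell$, vertex $j$ has $k$ leaves by time $t_{k-1}$, so $d_{t_{k-1}}(j) \geq k+1$ (resp.\ $k+2$); the hypothesis $k \leq \lfloor(t-T_j)/s\rfloor + 1$ ensures $t_{k-1} \leq t$, and degrees are non-decreasing in time, so the stated bounds on $d_t(j)$ follow. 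The main step to pin down rigorously is this single-step Markov lower bound: uniformity over histories is granted because the leaf-bouncing probability only sees the star centered at $j$, which the event $B_1 \cap \cdots \cap B_{m-1}$ fully specifies.
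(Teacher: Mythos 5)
Your proof is correct and takes essentially the same approach as the paper: you lower bound the degree via the probability that the walker performs $s/2$ leaf--bounce round trips between consecutive additions, with per-trip probability $m/(m+1)$ (resp.\ $m/(m+2)$ at the root), and the telescoping products $\prod_{m=1}^{k-1}\left(\frac{m}{m+1}\right)^{s/2}=k^{-s/2}$ and $\prod_{m=1}^{k-1}\left(\frac{m}{m+2}\right)^{s/2}=\left(\frac{2}{k(k+1)}\right)^{s/2}$ are exactly the paper's computation for its auxiliary star-growing process. The only difference is presentational: the paper computes the bound on a separate star process and then compares it informally to \Name{}, while you run the chain rule on the events $B_1\cap\cdots\cap B_{k-1}$ directly inside \Name{}, which if anything makes the comparison step more explicit.
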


%Note that, the above bound becomes interesting for large  $t$ and large $k$. In particular, if   $k=1$ the bound  is not useful given that $\mathbb{P}\left(d_{t+\tau_i}(i)\geq 2  \mid \tau_i < \infty\right)=1$, if $i \neq root$, and 
%$\mathbb{P}\left(d_{t+\s}(i)\geq 3\right)=1$, if $i=root$,  for every $t$.

\begin{proof}
To prove the claim we compute the degree distribution of a node in a much simpler process. 
This simpler process starts at time $0$ with a 3-node graph made by a node $c$ that has exactly one child (leaf) and one parent and the random walk placed on node $c$. If at any step the random walk chooses the parent node, the process stops. Otherwise, after $s$ steps the random walk adds 
a new leaf node. Note that since $\s$ is even, this simple process will grow a star. The star stops growing when the random walk steps into the parent node. Figure~\ref{fig:star-process} illustrates this simple growing star process.

At time $t>0$ the random walk can have added at most $\lfloor \frac{t}{\s}\rfloor$ nodes to $c$. In particular, it adds one node if for $s$ steps consecutive, it selects a child of $c$ (this happens with probability $i/(i+1)$ if $i$ is the current degree of node $c$) and then steps back to $c$. It follows that the probability that the degree of node $c$ at time $t$ (we denote it by $d_t(c)$) is at least $k+1$ for $k\in \{1, \dots, \lfloor \frac{t}{\s}\rfloor +1\}$ is equal to: 
\begin{align*}
\mathbb{P} \left(d_t(c)\geq k+1\right)&= \prod_{i=1}^{k-1}\left(\frac{i}{i+1}\right)^{\frac{\s}{2}}= \left(\frac{1}{k}\right)^{\s/2}\;,
%\mathbb{P}\left(
%\begin{tabular}{c}
%\emph{random walk moves to the parent } \\
%\emph{ between  time $(k-1)\s+1$ and $k\s-1$
%% interval $\big((k-1)\s,k\s\big)$
%}
%\end{tabular}\right) \:,
%\\
%&=
% \left(\frac{1}{2}\right)^{\frac{\s}{2}}\left(\frac{2}{3}\right)^{\frac{\s}{2}} \cdots \left(\frac{k-2}{k-1}\right)^{\frac{\s}{2}}\mathbb{P}\left(
%\begin{tabular}{c}
%\emph{random walk takes the bad edge} \\
%\emph{ between  time $(k-1)\s+1$ and $k\s-1$
% interval $\big((k-1)\s,k\s\big)$
% }
%\end{tabular}\right) \:.
%\\
%&= \frac{1}{\left(k-1\right)^{\frac{\s}{2}}}\;\mathbb{P}\left(
%\begin{tabular}{c}
%\emph{random walk takes the bad edge} \\
%\emph{between  time $(k-1)\s+1$ and $k\s-1$ 
%%in the time window $\big((k-1)\s,k\s\big)$
%}
%\end{tabular}\right)
\end{align*}
with the usual convention that $\prod_{i=1}^{0}\left(\frac{i}{i+1}\right)^{\frac{\s}{2}}=1$. 
%We can then conclude that $\mathbb{P} \left(d_n(c)=k\right) \geq \frac{1}{(k-1)^{\frac{\s}{2}}} \cdot \frac{1}{k} \geq \frac{1}{k^{\frac{\s}{2}}} \cdot \frac{1}{k}$, where the factor $1/k$ is the probability that the random walk traverses the bad edge at time $k\s + 1$.
%Thus, the probability that  at time $t$ vertex $c$ has degree at least $k+1$, for $k\in \{1, \dots, \lfloor \frac{t-T_j}{\s}\rfloor+1\}$, is equal to $\mathbb{P} \left(d_t(c)\geq k + 1\right) = \prod_{i=1}^{k-1}\left(\frac{i}{i+1}\right)^{\frac{\s}{2}}= \left(\frac{1}{k}\right)^{\s/2}$. 

\medskip

Let $j$ be an arbitrary vertex of the graph generated by the \Name{} model with even step parameter $\s$ and assume that $T_j < \infty$ (recall that $T_j$ is the first time a new node is connected to node $j$).
% 
%Condition on the fact that at least one new vertex has already been attached to $i$, we know that
%there is a time (multiple of $\s$) such that the walker enters vertex $i$ and a new vertex is attached to
%$i$.
%%
%Let $\tau_i$ denote the first time a new vertex is attached to $i$.
Let us first consider the case in which  $j$ is different from  the root.  At time $T_j$ vertex $j$ has exactly two neighbors; a leaf and a parent. Thus, at this point in time, the dynamics of the \Name{} model is similar to that of the simple star growing process described above. 

In particular, the probability that at time $t \geq T_j$, the degree $d_{t}(j)$ is larger than $k+1$ for $k \in \{1, \dots, \lfloor \frac{t-T_j}{\s}\rfloor+1\}$ is greater than the probability that the corresponding probability for the simple star, because the new nodes can be added to $j$ without being consecutive and in particular nodes can still be added after the random walk steps on to the parent node.
%
%This latter probability, in turn, is greater 
%%Indeed, after time $\tau_i$, the probability of adding $(k-1)$ consecutive new vertexs 
%%to vertex $i$ and not adding to it the $k$-th new vertex is greater 
%than the probability that the growing star process produces a star whose center vertex has degree greater than or equal to $k+1$. This follows from the fact that the random walk can step on its parent and later return to $j$ to add the next new node (thus, consecutive). In contrast, the star growing process stops as soon as the walker steps on the parent node. 

In case $j$ is the root ($j=\Root$), we have that $T_j=\s$. Moreover, we can treat the self-loop at the root as the edge leading to the parent node. 
In order to bound the root degree distribution we consider a slight variation of the star growing process since the initial configuration has a self-loop. Consequently, the probability the walker takes the leaf node in the first step equals $1/3$ (rather 
than $1/2$) and the probability it takes the parent in the first step is $2/3$.
If $c_\Root$ denotes the center vertex of this variant of the growing star process, similarly to the previous case, for % $t \geq T_j$ and 
$k\in \{1, \dots, \lfloor \frac{t}{\s}\rfloor +1\}$, we have
\begin{align*}
\mathbb{P} \left(d_{t}(c_\Root)\geq k+2\right)&= \prod_{i=1}^{k-1} \left( \frac{i}{i+2}\right)^{\frac{\s}{2}}=\left( \frac{2}{k(k+1)} \right)^{\frac{\s}{2}}\;.
%
%\mathbb{P}\left(
%\begin{tabular}{c}
%\emph{random walk moves to the parent} \\
%\emph{ between time $(k-1)\s+1$ and $k\s-1$
%in the time window $\big((k-1)\s,k\s\big)$
%}
%\end{tabular}\right)
%\\
%&\left(\frac{1}{3}\right)^{\frac{\s}{2}}\left(\frac{2}{4}\right)^{\frac{\s}{2}} \left(\frac{3}{5}\right)^{\frac{\s}{2}}\cdots \left(\frac{k-2}{k-1}\right)^{\frac{\s}{2}}\mathbb{P}\left(
%\begin{tabular}{c}
%\emph{random walk takes the bad edge} \\
%\emph{ between  time $(k-1)\s+1$ and $k\s-1$
%%in the time window $\big((k-1)\s,k\s\big)$
%}
%\end{tabular}\right)
%\\
%&= 2^{\frac{\s}{2}}\; \frac{1}{\left(k-2\right)^{\frac{\s}{2}}} \;\frac{1}{\left(k-1\right)^{\frac{\s}{2}}}\mathbb{P}\left(
%\begin{tabular}{c}
%\emph{random walk takes the bad edge} \\
%\emph{between  time $(k-1)\s+1$ and $k\s-1$
%% in the time window $\big((k-1)\s,k\s\big)$
%}
%\end{tabular}\right) 
\end{align*}
and then the lower bound  for the degree $d_t(r)$ is derived through the same reasoning.
%where  $\prod_{i=1}^{k-1}\left(\frac{i}{i+2}\right)^{\frac{\s}{2}}=1$ if $k=1$. 
%Hence, $\mathbb{P} \left(d_{t+\s}(c_\Root)\geq k+2\right) = \prod_{i=1}^{k-1} \left( \frac{i}{i+2}\right)^{\frac{\s}{2}}=\left( \frac{2}{k(k+1)} \right)^{\frac{\s}{2}}$.
%%%%%%%%%%%%%%%%%%%%
%%%%%%%%%%%%%%%%%%%%

\qed
\end{proof}

Albeit the presence of  the bouncing-back effect,  it is worth noticing that the random walk will not get stuck  going back and forth from a vertex to its leaves, and it will eventually stop bouncing. 
In particular, we show that for any $\s$ even,  the probability of bouncing back $k$  times goes to zero faster than $k^{-1/2}$.
%
%In the particular case of $\s=2$, we show that the probability of adding $k$ consecutive vertices to the same vertex goes to zero faster that $1/\sqrt{k}$. 
%

\begin{lemma}\label{lem:star-escape}
Let $\s$ be even,  $t_0 \in 2\mathbb{Z}_{\geq 0}$ and $t_0 \ge s$, and $i$ a vertex of the graph. Then, for all $k\geq 1$ it holds 
\[
\mathbb{P}\big(\W_{t_0 +2}=i, \W_{t_0 +4}=i,\cdots, \W_{t_0 +2k}=i \mid \W_{t_0}=i\big)\leq
\begin{cases}
 2 \frac{\sqrt{d_{t_0}\!(i)-1}}{\sqrt{d_{t_0}\!(i) + k -1}} \;, & \text{ if $d_{t_0}\!(i)\geq 2$, } 
 \\
 \frac{1}{\sqrt{k}} & \text{ if $d_{t_0}\!(i)=1$, } 
 \end{cases}
\] 
where $d_{t_0}\!(i)$ is  the degree of vertex $i$ at time $t_0$.
\end{lemma}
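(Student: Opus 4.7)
The plan is to bound, for each of the $k$ consecutive two-step windows, the conditional probability that the walker returns to $i$ by $1-1/(2D_j)$, where $D_j \IsDef d_{t_0+2j}(i)$ evaluated on the bouncing event, and then telescope.

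First I would establish a deterministic bound on $D_j$. Since $\s\ge 2$, each window $(t_0+2j,\,t_0+2(j+1)]$ contains at most one multiple of $\s$, so at most one vertex is created inside it; therefore $D_{j+1}\le D_j+1$ and hence $D_j \le D_0 + j$ for all $j$.

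The core estimate is a one-step escape bound: writing $A_j \IsDef \{W_{t_0+2m}=i,\, 1\le m\le j\}$ for the bouncing event up to step $j$, I would show
\[
\mathbb{P}\bigl(W_{t_0+2(j+1)} \neq i \,\big|\, W_{t_0}=i,\, A_j\bigr) \;\ge\; \frac{1}{2\,D_j}.
\]
For $i\neq \Root$, let $p$ denote the tree-parent of $i$; then $d_p\ge 2$ because $p$ is adjacent to $i$ and to either its own parent or, if $p=\Root$, to itself via the self-loop. Moreover, every node addition happens at a multiple of $\s$, which is even and hence an observation time $t_0+2m$; by the conditioning the walker sits at $i$ at that instant, so every new leaf is attached to $i$ and never to $p$. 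Hence $d_p$ stays $\ge 2$ throughout the bouncing, and the escape event ``step from $i$ to $p$, then from $p$ to any non-$i$ neighbor'' has probability at least $(1/D_j)(1-1/d_p)\ge 1/(2D_j)$. For $i=\Root$, the length-$2$ path ``traverse the self-loop, then step to a child'' has probability $(1/d_r)((d_r-1)/d_r)=(d_r-1)/d_r^2 \ge 1/(2 d_r)$ whenever $d_r\ge 2$, so the same lower bound holds in the root case.

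Telescoping and using $1-x\le e^{-x}$ together with $\sum_{j=0}^{k-1} 1/(D_0+j)\ge \ln\bigl((D_0+k)/D_0\bigr)$ then yields
\[
\mathbb{P}(A_k \mid W_{t_0}=i) \;\le\; \prod_{j=0}^{k-1}\!\Bigl(1-\tfrac{1}{2(D_0+j)}\Bigr) \;\le\; \exp\!\Bigl(-\tfrac{1}{2}\ln\tfrac{D_0+k}{D_0}\Bigr) \;=\; \sqrt{\tfrac{D_0}{D_0+k}}.
\]
For $D_0\ge 2$, a short algebraic check (reducing to $3D_0(D_0+k-1)\ge 4k$) gives $\sqrt{D_0/(D_0+k)} \le 2\sqrt{(D_0-1)/(D_0+k-1)}$; for $D_0=1$ the bound reduces directly to $1/\sqrt{k+1}\le 1/\sqrt{k}$. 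This matches the two cases of the lemma statement.

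The main obstacle I expect is handling the root case, because the self-loop keeps the walker at $r$ rather than carrying it away, so the ``step to a degree-$\ge 2$ parent'' argument does not apply verbatim; a short enumeration of length-$2$ paths from $r$ is needed to recover the same escape lower bound. A secondary technicality is certifying that $d_p$ stays $\ge 2$ during the bouncing, which follows from the pleasant alignment (for $\s$ even) between node-addition times and observation times.
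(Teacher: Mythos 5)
Your proof is correct, but it takes a genuinely different route from the paper's. The paper computes the exact two-step return probability as $B_t(i)/d_t(i)$, where $B_t(i)=\sum_{j\in\mathcal{N}_t(i)}1/d_t(j)$ (suitably modified at the root to account for the self-loop), proves the key bound $B_{t_0}(i)\le d_{t_0}(i)-1/2$ from the fact that the parent has degree at least two, propagates it through the bounces via the monotonicity $\frac{B+h}{d+h}$, and ends with the explicit product $\prod_{j=d}^{d+k-1}\frac{2j-1}{2j}$, which it estimates by Stirling-type factorial bounds to obtain the stated constants. You instead lower-bound, in each two-step window, a single escape path ($i\to$ parent $\to$ anything other than $i$; at the root, self-loop then a child) by $1/(2d)$, use the deterministic growth bound $d_{t_0+2j}(i)\le d_{t_0}(i)+j$ (at most one node addition per window since $\s\ge2$ and additions occur at even times), and telescope with $1-x\le e^{-x}$ and a harmonic-sum/logarithm comparison to get $\sqrt{D_0/(D_0+k)}$, recovering the stated constants by elementary algebra. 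Your route is more elementary (no exact return probability, no Stirling estimates) at the cost of a slightly cruder intermediate bound; both yield the $O(k^{-1/2})$ decay that Corollary~\ref{cor:exit-time} needs. One small point to fix: the paper counts the self-loop twice in the root's degree (from the root the loop is traversed with probability $2/d_r$ and a given child is chosen with probability $1/d_r$), so your root escape probability should read $\frac{2}{d_r}\cdot\frac{d_r-2}{d_r}$ rather than $\frac{1}{d_r}\cdot\frac{d_r-1}{d_r}$; since $d_r\ge3$ whenever $t_0\ge \s$, this is still at least $\frac{1}{2d_r}$, so your conclusion is unaffected.
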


\begin{proof}
%Let $\mathcal{R}^i_{\s,m}(k)\IsDef\{ \W_{\s n}=i \;, \text{ for $m<n\leq  m+k$} \}$,  denote the event that the $k$ consecutive new vertexs are attached to vertex $i$ after the $m$-th vertex has joined the network.
%%
%It holds that $\mathcal{A}^i_{\s,m}  = \bigcap_{k=1}^\infty \mathcal{R}^i_{\s,m}(k)$ and, in particular, $\bigcap_{l=1}^k \mathcal{R}^i_{\s,m}(l)= \mathcal{R}^i_{\s,m}(k)$.
%%
%Thus, to prove the claim, it suffices to show that $\mathbb{P}(\mathcal{R}^i_{\s,m}(k) \mid \W_{\s m=i})$ goes to zero as $k$ goes to infinity. 
%%
%
%We begin considering the case $\s=2$
%%%
%%Let $d_m(i)$ denote the degree of vertex $i$ after the $m$-th vertex is attached to  the network, 
%and 
%
% By the assumption that  the random walk is in $i$ at time $ t_0$, we have that $d_{t_0}\!(i)\geq 2$, if $i\neq \Root$, whereas $d_{t_0}\!(i)\geq 3$, if $i=\Root$.
% (note that if $m=1$ then necessarily $i = \Root$).
%
Let $\mathcal{N}_{t_0}\!(i)$ denote the set of neighbouring  vertices of  $i$ at time $t_0$. The probability  the  walker returns to $i$ at time $t_0 + 2$ given that $\W_{t_0}=i$ is equal to $B_{t_0}(i)/d_{t_0}(i)$, where
\begin{equation*} 
B_{t_0}(i)=
\begin{cases}
\dsl\sum_{j \in \mathcal{N}_{t_0}(i)}\frac{1}{d_{t_0}(j)}  \;, 
&
\text{if $i\neq \Root$\;,} 
\\
\dsl\sum_{j \in \mathcal{N}_{t_0}(\Root) \setminus \{\Root\}}\frac{1}{d_{t_0}(j)} + \frac{4}{d_{t_0}(\Root)} \;
 &
\text{if $i=\Root$}\;.
\end{cases}
\end{equation*}
%
%Note that  $d_{t_0}\!(i)\geq 1$ and, as far as the root is concerned, we may assume  $d_{t_0}\!(\Root)\geq 3$ (the first leaf will necessary be added to $\Root$).
%
%To simplify the notation let us denote by  $B_{t_0}\!(i)= \sum_{j \in \mathcal{N}_{t_0}\!(i)}\frac{1}{d_{t_0}\!(j)}$, if $i\neq \Root$, and $B_{t_0}\!(i)=\sum_{j \in \mathcal{N}_{t_0}\!(\Root) \setminus \{\Root\}}\frac{1}{d_{t_0}\!(j)} + \frac{4}{d_{t_0}\!(\Root)}$, if $i = \Root$. 
%

If after coming back $m$ consecutive times to $i$ we  have added $h\le m$ nodes  (necessarily to $i$), then $B_{t_0+2m}\!(i)\le B_{t_0}\!(i)+h$, where the equality holds for $i \neq \Root$. But then, because $B_t(i)/d_t(i)$ is smaller than $1$, it holds.
\[\frac{B_{t_0+2m}(i)}{d_{t_0+2m}(i)} \le \frac{B_{t_0}\!(i)+h}{d_{t_0}\!(i)+h} \le \frac{B_{t_0}\!(i)+m-1}{d_{t_0}\!(i)+m-1}  \]
It then follows:
 \begin{align}\label{eq:return}
%\mathbb{P}(\mathcal{R}^i_{2,m}(k)\mid \W_{2 m}=i)
\mathbb{P}\big(\W_{t_0 +2}=i, & \W_{t_0 +4}=i,\cdots, \W_{t_0 +2k}=i \mid \W_{t_0}=i\big) = \frac{B_{t_0}\!(i)}{d_{t_0}\!(i)}\cdot \frac{B_{t_0+2}(i) }{d_{t_0+2}(i) }\cdot \ldots \cdot \frac{B_{t_0+2k}(i) }{d_{t_0+2k}(i) }\nonumber\\
	& \le \frac{B_{t_0}\!(i)}{d_{t_0}\!(i)}\cdot \frac{B_{t_0}\!(i) + 1}{d_{t_0}\!(i) + 1}\cdot \ldots \cdot \frac{B_{t_0}\!(i) + (k-1)}{d_{t_0}\!(i) + (k-1)}.
\end{align}
We observe that the equality holds, for $i\neq \Root$, when $\s=2$ and the walker indeed adds a new leaf every time it bounces back to vertex $i$.
%The inequality follows from the following considerations. If after coming back $m$ consecutive times to $i$ we can have added $h\le m$ nodes  (necessarily to $i$) and then h$B_t$we can add new nodes after having added $\frac{B_{t_0}\!(i) + m}{d_{t_0}\!(i)+m}\leq \frac{B_{t_0}\!(i) + (m+1)}{d_{t_0}\!(i)+(m+1)}$ for any $m$, and from the fact that we implicitly assume that every time the walker bounce  back to vertex $i$, we add a new leaf to $i$. Note that the latter scenario corresponds to  $\s=2$, in which case  the equality holds.   
%where, $B^i_m= \sum_{j \in \mathcal{N}_m(i)}\frac{1}{d_m(j)}$, if $i\neq \Root$, and $B^i_m=\sum_{j \in \mathcal{N}_m(\Root) \setminus \{\Root\}}\frac{1}{d_m(j)} + \frac{4}{d_m(\Root)}$, if $i = \Root$.
%%
Also,  we have that    $B_{t_0}\!(i)\leq d_{t_0}\!(i) - 1/2$, and this holds    regardless of whether $i=\Root$  or not. 
%
%
%The fact that $1< B^i_m$ follows immediateley from the assumption that the random walk is in $i$ at time $m\s$ and a new vertex (a leaf) is added to it. Thus, at time $m\s$ vertex $i$ has at least a leaf \com{what about the root}
%
%
For $i\neq \Root$, the inequality $B_{t_0}\!(i)\leq d_{t_0}\!(i) - 1/2$  follows from the observation that if the random walk is in $i$ at time $t_0$, the vertex $i$ has at least one neighbour  which is not a leaf (it might be the only one) and therefore the latter will have degree  at least $2$. Hence, $\sum_{j \in \mathcal{N}_{t_0}\!(i)}1/d_{t_0}\!(j)\leq d_{t_0}\!(i) -1 + 1/2= d_{t_0}\!(i) -1/2$.
For $i=\Root$ instead, the inequality follows from 
the fact that $B^\Root_0\leq d_{t_0}\!(\Root)-2 + 4/d_{t_0}\!(\Root)$, together with $d_{t_0}\!(\Root)\geq 3$ because the first node at time $t=\s$ is necessarily added to the root. 
Applying the  bound $B_{t_0}\!(i)\leq d_{t_0}\!(i) - 1/2$ to each factor appearing in 
Equation~\eqref{eq:return}, we obtain
\begin{align*}
\mathbb{P}\big(\W_{t_0 +2}=i, & \W_{t_0 +4}=i,\cdots, \W_{t_0 +2k}=i  \mid \W_{t_0}=i\big)
\leq \prod_{j=0}^{k-1} \frac{2d_{t_0}\!(i) + (2j -1)}{2(d_{t_0}\!(i)+j)}= \prod_{j=d_{t_0}\!(i)}^{d_{t_0}\!(i)+ k-1} \frac{2j -1}{2j} \\
 &= \frac{\left( 2(d_{t_0}\!(i)+k-1)\right)! \; (d_{t_0}\!(i)-1)!^ 2 }{4^k \;(2d_{t_0}\!(i)-2)!\; (d_{t_0}\!(i)+k-1)!^2} 
= \frac{(d_{t_0}\!(i)-1)!^ 2}{(2d_{t_0}\!(i)-2)!}\cdot
\frac{\left( 2(d_{t_0}\!(i)+k-1)\right)!}{4^k \;  (d_{t_0}\!(i)+k-1)!^2} 
\;.								
\end{align*}
Using the bounds
% for Stirling's formula
  $\sqrt{2 \pi} n^{n+1/2} e^{-n} \le n! \le  e n^{n+1/2} e^{-n}$ (holding for all positive integer $n$), we have  
\[
\mathbb{P}\big(\W_{t_0 +2}=i, \W_{t_0 +4}=i,\cdots, \W_{t_0 +2k}=i \mid \W_{t_0}=i\big) \leq 
\begin{cases}
 \left(\frac{\mathrm{e}}{\sqrt{2\pi}}\right)^3 \frac{\sqrt{d_{t_0}\!(i)-1}}{\sqrt{d_{t_0}\!(i) + k -1}}
% \sim \frac{1}{\sqrt{k}}
& \text{ if $d_{t_0}\!(i)\geq 2$, } 
 \\
 \left(\frac{\mathrm{e}}{\sqrt{2}\pi}\right) \frac{1}{\sqrt{k}} & \text{ if $d_{t_0}\!(i)=1$.} 
\end{cases}
\]
\qed			
\end{proof}

The result of Lemma~\ref{lem:star-escape} guarantees that the random walk will not get stuck  bouncing back in a particular vertex. In particular,  it implies   that   the first time at which the random walk stops bouncing back to the same vertex is finite almost surely. 

\begin{corollary}\label{cor:exit-time}
Let $\s$ be even, $t_0 \in 2\mathbb{Z}_{\geq 0}$ and  define $\tau \IsDef \inf \{n\geq 1: \W_{t_0+2n}\neq \W_{t_0}\}$, i.e.,  the first time the walker does not come back to the initial vertex after two steps. It holds 
\[
\mathbb{P}\left( \tau < \infty \right)=1\;.
\]
\end{corollary}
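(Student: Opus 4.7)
The plan is to deduce this directly from Lemma~\ref{lem:star-escape} by writing $\{\tau=\infty\}$ as a countable decreasing intersection of the events whose probabilities are controlled there, and applying continuity of probability from above.

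First I would observe that, by definition of $\tau$,
\[
\{\tau > k\} \;=\; \{\W_{t_0+2}=\W_{t_0},\,\W_{t_0+4}=\W_{t_0},\dots,\W_{t_0+2k}=\W_{t_0}\},
\]
so $\{\tau=\infty\}=\bigcap_{k\geq 1}\{\tau>k\}$ is a decreasing limit. Let $\mathcal{F}_{t_0}$ be the $\sigma$-algebra generated by the whole history of the process (walker positions and graph) up to time $t_0$; the location $\W_{t_0}$ and the degree $d_{t_0}(\W_{t_0})$ are $\mathcal{F}_{t_0}$-measurable. On the event $\{\W_{t_0}=i\}$, Lemma~\ref{lem:star-escape} gives, for every $k\geq 1$,
\[
\mathbb{P}\bigl(\tau>k \,\big|\, \mathcal{F}_{t_0}\bigr) \;\leq\;
\begin{cases}
2\,\dfrac{\sqrt{d_{t_0}(i)-1}}{\sqrt{d_{t_0}(i)+k-1}}, & d_{t_0}(i)\geq 2,\\[4pt]
\dfrac{1}{\sqrt{k}}, & d_{t_0}(i)=1.
\end{cases}
\]
In both cases the upper bound tends to $0$ as $k\to\infty$, with $d_{t_0}(i)$ fixed (and $\mathcal{F}_{t_0}$-measurable).

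Next I would apply dominated convergence (or simply continuity of probability from above) to the conditional probability: since the events $\{\tau>k\}$ are nested and decreasing to $\{\tau=\infty\}$,
\[
\mathbb{P}\bigl(\tau=\infty \,\big|\, \mathcal{F}_{t_0}\bigr) \;=\; \lim_{k\to\infty}\mathbb{P}\bigl(\tau>k \,\big|\, \mathcal{F}_{t_0}\bigr) \;=\; 0 \quad \text{a.s.}
\]
Taking expectations yields $\mathbb{P}(\tau=\infty)=0$, hence $\mathbb{P}(\tau<\infty)=1$, as claimed.

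I do not anticipate any real obstacle: the work is already done by Lemma~\ref{lem:star-escape}, and the only mild subtlety is that $d_{t_0}(\W_{t_0})$ is random, which is handled cleanly by conditioning on $\mathcal{F}_{t_0}$ before taking the limit. One minor point worth stating explicitly in the write-up is that the lemma's hypothesis $t_0\ge s$ can be assumed without loss, since for $t_0<s$ there is only a single vertex (the root with its self-loop) so $\tau=1$ deterministically on that event and the claim is trivial.
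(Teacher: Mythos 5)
Your proof is correct and follows essentially the same route as the paper: both deduce the corollary from Lemma~\ref{lem:star-escape} by noting $\{\tau>k\}$ is the bouncing-back event whose probability is $\mathcal{O}(k^{-1/2})$, and then letting $k\to\infty$ by continuity of probability. Your extra care in conditioning on the history at time $t_0$ (to handle the random degree) and in disposing of the case $t_0<s$ is a welcome refinement but not a different argument.
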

\begin{proof}
Note that the distribution of $\tau$ depends on the step parameter $\s$ as well as on the neighborhood of the walker at time $t_0$. However, Lemma~\ref{lem:star-escape} assures that for all $\s$ even, $\mathbb{P}\left( \tau > k\right)= \mathcal{O} \left(k^{-1/2}\right)$. Therefore, using that the sequence of events $\{\tau\leq k\}_k$ is increasing, we have 
\begin{align*}
\mathbb{P}\left( \tau <\infty \right)= \lim_{k\uparrow \infty}\mathbb{P}\left( \tau \leq k\right)= 
%\lim_{k\uparrow \infty}\left( 1 - \mathbb{P}\left( \tau > k\right) \right) = 
1- \lim_{k\uparrow \infty} \mathbb{P}\left( \tau > k\right)    =1 \;. %\geq 1- \lim_{k\uparrow \infty} k^{-1/2}=1\;.
\end{align*}
\end{proof}

%!TEX root = main.tex
\subsection{Recurrence of \Name{} for $\s$ even}\label{sec:change-of-parity}

Recall that in the \Name{} model the initial  node (root) has a self-loop. This local feature at the root plays a very prominent role in the model  when $s$ is even, as we now discuss. Let the level of a node in the generated tree denote its distance to the root. Note that the root is the only node at level 0, while all its neighbors are at level 1. Similarly we define the level of the random walk at time $t$ as the distance from $W_t$ to the root, denoted by $d(W_t,r)$. 

\begin{definition}
We say that the random walk at time $t$ is even $\iff$ $d(W_t,r) + t$ is even, and odd otherwise.
\end{definition}

The parity of the random walk has important consequences on the behaviour of the model when $s$ is even. In particular, as long as the random walk is even (resp., odd) new vertices can only be added to even (resp. odd) levels. However, if the random walk changes its parity once (or an odd number of  times) between two node additions, the next node will be added to a level with different parity. Clearly, changing parity  an even number of times between two node additions does not change the parity of the level to which nodes are added.

Note that the parity of the random walk can only change if the random walk traverses the self-loop. In fact, the latter is the only case in which the distance from the root stays constant and the time increases by one. For all other random walk steps instead, the parity does not change because the time always increases by one while the distance  either increases or decreases by one. 

We say that the random walk \emph{changes parity} whenever it traverses the self-loop.
The  change of parity is fundamental for the growing structure of the tree. Consider the addition of a node $i$ to the tree. A subsequent new node can only be connected to $i$ after the random walk changes its parity. Thus, once added to the tree, a new node can only receive a child node if the random walk changes parity after it has been added. This, in particular, implies that  the set of nodes that can receive a new node is finite and stays constant until the random walk changes its parity.
Therefore, in order to grow the tree to deeper levels the random walk must change its parity. Will the random walk change its parity a finite number of times? If so, the tree would have a finite depth. 
It is not hard to see that if the random walk visits the root infinitely many times than it must change its parity an infinite number of times. 
Thus, showing that the random walk is recurrent will also implies that the random walk changes its parity an infinite number of times almost surely. This is a necessary condition for the tree to grow its depth unbounded. 

\medskip

Before presenting the main theorem, we provide a preliminary result which relates the visits to a node to the visits to its neighbors. In particular, we show that if the random walk visits a node infinitely often, then it also visits any neighbors  infinitely often. 

\begin{lemma}\label{lem:neighbor}
Let $i$ be a vertex of the graph and $(i,j)$ an edge of the graph. If $i$ is recurrent, then the random walk traverses $(i,j)$ infinitely many times almost surely.  
\end{lemma}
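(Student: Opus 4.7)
The plan is to apply the conditional Borel--Cantelli lemma (Lévy's extension) along the sequence of visits of the walker to $i$. Let $\mathcal{F}_t = \sigma(W_0, \ldots, W_t)$, and let $V_1 < V_2 < \cdots$ denote the successive times at which $W_t = i$; by the recurrence hypothesis these stopping times are all finite a.s.\ and $V_n \to \infty$. Pick a time $t^*$ at which the edge $(i,j)$ is already present in the graph (edges are never removed, so such a time exists), and let $N_0$ be the first index with $V_{N_0} \ge t^*$. For $n \ge N_0$ define the event $A_n := \{W_{V_n + 1} = j\}$. Since each occurrence of $A_n$ is a traversal of $(i,j)$, it suffices to prove $\mathbb{P}(A_n \text{ i.o.}) = 1$.

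The key estimate is a lower bound on $\mathbb{P}(A_n \mid \mathcal{F}_{V_n})$. Given the history up to $V_n$, the walker selects any fixed neighbor of $i$ with probability $1/d_{V_n}(i)$, so $\mathbb{P}(A_n \mid \mathcal{F}_{V_n}) = 1/d_{V_n}(i)$. I would then bound $d_{V_n}(i)$ from above: in \Name{} a new edge incident to $i$ is created only when a fresh leaf is attached to $i$, which happens only if the walker occupies $i$ at some time $k\s$, and no other event modifies $d_t(i)$. Hence the degree of $i$ grows by at most $1$ per visit, yielding the pathwise bound $d_{V_n}(i) \le d_{V_{N_0}}(i) + (n - N_0)$ for every $n \ge N_0$. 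Since $d_{V_{N_0}}(i)$ is a.s.\ finite, combining these two facts gives
\[
\sum_{n \ge N_0} \mathbb{P}(A_n \mid \mathcal{F}_{V_n}) \;\ge\; \sum_{n \ge N_0} \frac{1}{d_{V_{N_0}}(i) + (n - N_0)} \;=\; \infty \quad\text{a.s.}
\]

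To transfer this divergence into the statement that $A_n$ happens infinitely often, I would work with the shifted filtration $\mathcal{G}_n := \mathcal{F}_{V_n + 1}$, for which $A_n$ is $\mathcal{G}_n$-measurable. By the strong Markov property at the stopping time $V_n$ and the tower property, $\sum_n \mathbb{P}(A_n \mid \mathcal{G}_{n-1})$ is a.s.\ infinite as well, and Lévy's conditional Borel--Cantelli lemma yields $\mathbb{P}(A_n \text{ i.o.}) = 1$, which proves the claim. The main obstacle I foresee is the measure-theoretic bookkeeping with the random stopping times $V_n$; the probabilistic content itself reduces to the harmonic-series divergence $\sum 1/n = \infty$ once the elementary degree bound is in place.
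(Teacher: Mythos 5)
Your proof is correct and follows essentially the same route as the paper's: condition on the successive visit times to $i$, use the fact that the degree of $i$ can grow by at most one per visit to lower-bound the chance of stepping to $j$ at the $n$-th visit by $1/\bigl(d_{V_{N_0}}(i)+n-N_0\bigr)$, and conclude from the divergence of the harmonic series via a second Borel--Cantelli argument. The only difference is the final tool: the paper couples the step decisions to an explicit sequence of independent indicators $\xi'_h$ (via shared uniforms) so as to apply the classical Borel--Cantelli lemma for independent events, whereas you invoke L\'evy's conditional Borel--Cantelli lemma along the filtration $\mathcal{G}_n=\mathcal{F}_{V_n+1}$, which accomplishes the same thing without constructing the coupling.
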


\begin{proof}
Let $J_{t_1}^{t_2}\!(i)=\sum_{k=t_1}^{t_2} \mathbbm{1}(W_k=i)$  be the number of times the random walk visits visits node $i$ between $t_1$ and $t_2$. Similarly we denote by   
$J_{t_1}^{t_2}\!(i,j)=\sum_{k=t_1}^{t_2} \mathbbm{1}(W_k=i)\mathbbm{1}(W_{k+1}=j)$ the number of times the random walk traverses the edge $(i,j)$ in the direction from $i$ to $j$. We observe that $\mathbb P(J_t^{\infty}(i) =\infty | \{\mathcal G_t, W_t\}= \{G,w\} )=1$ for any possible configuration $\{G,w\}$ reachable by the stochastic process $\{\mathcal G_t, W_t\}$ at time $t$ (i.e. such that $\mathbb P(\{\mathcal G_t, W_t\}=\{G,w\})>0$. In fact if it were not the case, it would follow 
\[\mathbb P(J_0^{\infty}(i)<\infty) > \mathbb P(J_t^{\infty}(i)<\infty | \{\mathcal G_t, W_t\}= \{G,w\} ) \times \mathbb P(\{\mathcal G_t, W_t\}=\{G,w\}) >0, \]
contradicting the hypothesis that $i$ is recurrent.

Let $t_0$ be a time such that the nodes $i$, $j$ and the link $(i,j)$ belong to the graph $G_0 =\mathcal G_{t_0}$. %We denote by $w_0$ the position of the walker at time $t_0$, i.e.~$W_{t_0}=w_0$. 
We want to show that $J_{t_0}^\infty(i,j)=\infty$ with probability one.
%If $i$  is recurrent then  the number of times the random walk visits  vertex $i$ after $t_0$ is infinite almost surely, that is $J(i)=\sum_{t=t_0}^\infty \mathbbm{1}(W_t=i)=\infty$ with probability one. In fact if it was not the case, it would be $\mathbb P(J)$
%Let us define $J({i,j})=\sum_{t=0}^\infty \mathbbm{1}(W_t=i)\mathbbm{1}(W_{t+1}=j)$ the number of times the random walk traverses the edge $(i,j)$.
%Assuming that $i$ is recurrent we want to show that  that $J(i,j)=\infty$ with probability one.
%

%Let $t_0$ be an arbitrary time and consider the graph at time $t_0$. Assume that $i\neq \Root$ is a vertex of the graph at time $t_0$ and denote by $d_0=d_{t_0}(i)$ the degree of $i$ at time $t_0$. 

Let $(\omega_t)_{t\geq t_0}$ be a sequence of independent uniform random variables over $[0,1]$, that we can use to determine which edge the random walk traverses at any time and then the evolution of the stochastic process $\{\mathcal G_t, W_t\}$ for $t\ge t_0$. 
In particular we can define a sequence of random variable $(\xi_t)_{t\geq t_0}$ that determines if the random walk traverses the edge $(i,j)$ if it is at node $i$ at time $t$:
\[\xi_t = \mathbbm{1}\!\left(\omega_t \in \left[0,\frac{1}{d_t(i)}\right]\right)\]
%\[
%\xi_t = \begin{cases} 
%1 & \text{ if $\omega_t \in \left[0,\frac{1}{d_t(i)}\right]$}\;,\\
%0 & \text{ otherwise}\;.
%\end{cases}
%\]
We have then 
\begin{align*}
J_{t_0}^t(i,j) = \sum_{k=t_0}^{t} \mathbbm{1}(W_k=i)  \, \mathbbm{1}(W_{k+1}=j)
= \sum_{k=t_0}^{t} \mathbbm{1}(W_{k}=i)\, \xi_k \;.
\end{align*}
Note that $(\xi_t)_t$ depend in general on the whole history  of the random walk till time $n$, because that history determines the degree of vertex $i$  at time $n$. 

For $h < J_{t_0}^\infty(i)+1$, let $t_h$ be the random time instants at which the random walk visits node $i$ for the $h$-th time after $t_0$. If $J_{t_0}^\infty(i) < \infty$, then let $t_h=t_{J_{t_0}^\infty(i)}+h$ for $h > J_{t_0}^\infty(i)$. We now define a new sequence of random variables as follows:
\[ \xi'_h = \mathbbm{1}\!\left(\omega_{t_h} \in \left[0,\frac{1}{d_{t_0}(i)+h}\right]\right)
\]
We observe that the variables  $(\xi'_h)_h$ are independent and the variable $\xi'_h$   is coupled with the variable $\xi_{t_h}$ through $\omega_{t_h}$. In particular $h< J_{t_0}^\infty(i)+1$ it always holds $\xi'_h \le \xi_{t_h}$ for  because the degree of node $i$ may have increased at most of $h$ after $h$ visits, i.e.~$d_{t_h}(i) \le d_{t_0}(i)+h$. Then for each possible path of the stochastic process, it holds:
\begin{align}
\label{e:J_ineq}
J_{t_0}^\infty(i,j) = \sum_{k=t_0}^{\infty} \mathbbm{1}(W_{k}=i)\, \xi_k \ge \sum_{k=1}^{J_{t_0}^\infty(i)} \xi'_k\;.
\end{align}

We now observe that
\begin{align*}
	\mathbb P(J_{t_0}^\infty(i,j)& <\infty)  \le \mathbb P\left(\sum_{h=1}^{J_{t_0}^\infty(i) } \xi'_h < \infty\right) \\
	& =  \mathbb P\left(\left\{\sum_{h=1}^{J_{t_0}^\infty(i) } \xi'_h < \infty \right\} \cap \left\{  J_{t_0}^\infty(i) =\infty \right\} \right ) +  \mathbb P\left(\left\{\sum_{h=1}^{J_{t_0}^\infty(i) } \xi'_h < \infty \right\} \cap \left\{  J_{t_0}^\infty(i) <\infty \right\} \right )\\
	& =  \mathbb P\left(\left\{\sum_{h=1}^{\infty } \xi'_h < \infty \right\} \cap \left\{  J_{t_0}^\infty(i) =\infty \right\} \right ) +  \mathbb P\left(\left\{\sum_{h=1}^{\infty } \xi'_h < \infty \right\} \cap \left\{  J_{t_0}^\infty(i) <\infty \right\} \right )\\
	& = \mathbb P\left(\sum_{h=1}^{\infty} \xi'_h < \infty \right) = 0.
\end{align*}
%\begin{align*}
%	\mathbb P(J_{t_0}^\infty(i,j)<\infty) & = \mathbb P(\{J_{t_0}^\infty(i,j) < \infty\} \cap \{J_{t_0}^\infty(i) < \infty\}) +  \mathbb P(\{J_{t_0}^\infty(i,j) < \infty\} \cap \{J_{t_0}^\infty(i) =\infty\})\\
%		& = 0 +  \mathbb P(\{J_{t_0}^\infty(i,j) < \infty\} \cap \{J_{t_0}^\infty(i) =\infty\})\\
%		& \le \mathbb P\left(\left\{\sum_{h=1}^{J_{t_0}^\infty(i) } \xi'_h < \infty\right\} \cap \{J_{t_0}^\infty(i) =\infty\}\right)\\
%		& = \mathbb P\left(\sum_{h=1}^{\infty} \xi'_h < \infty \right) = 0,
%\end{align*}
The inequality follows from Equation~\eqref{e:J_ineq}. In the second equality we have replaced $J_{t_0}^\infty(i)$ with infinity in the two sums: the first time this is permitted because the event of interest is a subset of $\{J_{t_0}^\infty(i) =\infty\}$, the second time because  $i$ is recurrent and then $\mathbb{P}\left(J_{t_0}^\infty(i) < \infty \right)=0$.  The last equality follows from applying Borel-Cantelli to the sequence of independent events $\{\xi'_h=1\}$. Indeed, $\sum_{h=1}^\infty \mathbb P(\xi'_h=1)= \sum_{h=1}^\infty \frac{1}{d_{t_0}+h} = \infty$ and then $\xi'_h=1$ infinitely often with probability one.
We can then conclude that $\mathbb P\left(J_{t_0}^\infty(i,j)=\infty\right)=1$.\qed
\end{proof}

\begin{corollary}
Let $i$ be a vertex of the graph and $j$ a neighbor of $i$. If $i$ is recurrent then $j$ is recurrent.
\end{corollary}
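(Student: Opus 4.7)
The corollary is an essentially immediate consequence of Lemma~\ref{lem:neighbor}, so my plan is simply to read off the conclusion from that lemma. Since the edge $(i,j)$ exists in the graph at some finite time $t_0$, and by hypothesis $i$ is recurrent, Lemma~\ref{lem:neighbor} applies and tells us that $(i,j)$ is traversed infinitely many times almost surely, where ``traversal'' in the notation of that lemma is counted in the $i\to j$ direction via $J_{t_0}^\infty(i,j) = \sum_{k\ge t_0}\mathbbm{1}(W_k=i)\mathbbm{1}(W_{k+1}=j)$.

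The final observation is that every $i\to j$ traversal produces a visit to $j$: if $W_k=i$ and $W_{k+1}=j$, then $W_{k+1}=j$ contributes to $J^\infty(j) = \sum_{k\ge 0}\mathbbm{1}(W_k=j)$. Therefore pathwise
\begin{equation*}
J^\infty(j) \;\ge\; J_{t_0}^\infty(i,j),
\end{equation*}
and the right-hand side is infinite with probability one by Lemma~\ref{lem:neighbor}. Hence $\mathbb{P}(J^\infty(j)=\infty)=1$, which is precisely the definition of $j$ being recurrent.

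The one minor subtlety to mention explicitly is that the hypothesis of Lemma~\ref{lem:neighbor} requires choosing $t_0$ so that both $i$, $j$, and the edge $(i,j)$ already belong to the graph at time $t_0$; since $j$ is a neighbor of $i$ in the current graph, such a $t_0$ exists, and the number of visits to $j$ before $t_0$ is nonnegative, so bounding only the tail $J_{t_0}^\infty(j)\ge J_{t_0}^\infty(i,j)$ is enough. There is no real obstacle here: all the work has already been done in Lemma~\ref{lem:neighbor}, and the corollary only repackages ``infinitely many $i\to j$ traversals'' as ``infinitely many visits to $j$''.
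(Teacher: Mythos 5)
Your proof is correct and is exactly the intended argument: the paper states this corollary without a separate proof, leaving implicit precisely the step you make explicit, namely that each $i\to j$ traversal counted by $J_{t_0}^\infty(i,j)$ yields a distinct visit to $j$, so Lemma~\ref{lem:neighbor} immediately gives infinitely many visits to $j$ almost surely.
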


 \medskip
 
We can now state the main theorem of this section.

\begin{theorem}\label{thm:recurrence}
In the \Name{}  model with even step parameter, all vertices of the graph are recurrent. 
\end{theorem}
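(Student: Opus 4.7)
The plan is to reduce recurrence of every vertex in the tree to the existence of a single recurrent vertex. By the corollary that immediately follows Lemma~\ref{lem:neighbor}, if a vertex $i$ is recurrent then every neighbor $j$ of $i$ is recurrent; since the graph is a connected tree in which every vertex lies at finite distance from $\Root$, a straightforward induction on $d(v,\Root)$ transports recurrence from any one vertex to the whole graph. In particular, this propagation automatically covers vertices that are created at future times, since each new vertex is attached at birth to the walker's current position, which already lies in the same connected component.

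It therefore suffices to exhibit a single recurrent vertex, and I would do this by contradiction. Suppose on an event $A$ of positive probability no vertex is visited infinitely often. On $A$ the walker necessarily visits infinitely many distinct vertices, and because new vertices are always appended to the walker's current position, the tree must grow in depth without bound on $A$. The strategy is then to combine Corollary~\ref{cor:exit-time} (bouncing at any vertex stops in finite time) and Lemma~\ref{lem:star-escape} (quantitative $O(k^{-1/2})$ tail on bouncing epochs) to control the walker's local behavior, and then to compare the level process $L_t = d(W_t,\Root)$ with a biased homogeneous random walk on $\mathbb{Z}_{\ge 0}$ whose drift points toward the origin, in the spirit of the coupling used in the proof of Theorem~\ref{thm:s_1} but with reversed drift. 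Such a coupling would force $L_t$ to return to $0$ infinitely often, thereby forcing $\Root$ to be visited infinitely often on $A$ and contradicting the assumption.

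The main obstacle lies in showing that the effective per-exit drift of $L_t$ really does point toward $\Root$. When the walker finishes its bouncing epoch at a non-root vertex $v$, its next non-bouncing move may land on either the parent or an internal child of $v$, and the latter would push the level further from the root. Proposition~\ref{pro:degree-power_law} indicates that vertices near $\Root$ tend to accumulate many leaf-neighbors (via the power-law degree lower bound), which biases the walker back; translating this qualitative picture into a uniform lower bound on the probability of stepping toward $\Root$ at each exit, in a non-stationary environment where the tree itself is evolving jointly with the walker, is the technically delicate step. I expect the cleanest path to be an inductive construction of the coupling along the random sequence of times at which the walker stops bouncing, using Lemma~\ref{lem:star-escape} to absorb the randomness of the bouncing duration at each step.
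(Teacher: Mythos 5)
Your reduction step is fine and matches the paper: by Lemma~\ref{lem:neighbor} and its corollary, recurrence propagates from the root along the unique path to any vertex, so it suffices to prove the root is recurrent. But the core of your argument --- forcing the level process $d(W_t,\Root)$ back to $0$ by coupling it with a homogeneous walk whose drift points toward the root --- is exactly the step you leave open, and it is a genuine gap, not a routine technicality. Proposition~\ref{pro:degree-power_law} only gives a \emph{lower} bound on the degree of non-leaf vertices and says nothing uniform about the ratio of leaf-children to internal children of the current vertex, so it does not yield a uniform lower bound on the probability of stepping toward $\Root$ at the end of a bouncing epoch; in an evolving tree the walker may well sit at vertices with several non-leaf children, and no drift estimate toward the root is available from the lemmas you cite. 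Lemma~\ref{lem:star-escape} and Corollary~\ref{cor:exit-time} only say that bouncing epochs end; they carry no directional information. (Also, your preliminary claim that on the bad event the tree depth must grow without bound does not follow: the tree could in principle grow arbitrarily wide at bounded depth, so visiting infinitely many distinct vertices does not by itself force unbounded level.)

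The idea you are missing is the parity mechanism, which is what makes the paper's proof work and removes any need for a drift computation. Since $\s$ is even and the only way to change the parity of $d(W_t,\Root)+t$ is to traverse the self-loop at the root, between two consecutive visits to the root all new vertices are attached at levels of the \emph{opposite} parity to the walker's; hence the set of vertices the walker can occupy at times of the relevant parity is finite and \emph{does not change} during that excursion. Observing the walk every two steps and then only at the (a.s.\ finite, by Corollary~\ref{cor:exit-time}) times at which it changes vertex, one obtains a time-homogeneous irreducible Markov chain on a fixed finite state space, which is recurrent, so the walker returns to the root in finite time a.s.; an induction over successive root visits (plus the small device of replacing the self-loop by an auxiliary parent $\Root'$ when the excursion has the wrong parity) finishes the proof. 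Without this confinement-to-a-finite-static-set observation, or a genuinely new quantitative drift estimate that you have not supplied, your outline does not close.
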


\begin{proof}
First we notice that to prove the theorem it is enough to show that the root is recurrent. 
In fact, given  an arbitrary node $i$ in the graph,  let  $(\Root=j_0, j_1, \dots, j_K=i)$ be the unique path from the root to $i$. If the root is recurrent then,  
using  Lemma~\ref{lem:neighbor},   we can conclude that  node $j_1$ 
%, i.e., the vertex at distance one from the root and on the path connecting the root to $i$,
 is recurrent. Iterating the reasoning along the nodes of the path, we have that  $i$ is recurrent.

We now  show that the root is recurrent.
Let us  define the following sequence of time instants  $\sigma_k \IsDef \inf\{t>\sigma_{k-1}: W_t=r\}$ and $\sigma_0\equiv 0$, i.e.~$\sigma_k$ is the time of the $k$-th visit to the root, if finite, or $\sigma_k=+\infty$ if the random walk visits the root less than $k$ times. The recurrence of the root is equivalent to   $\mathbb{P}(\sigma_k < \infty)=1$, for  all $k$. %\com{ (Is it really equivalent to recurrence?)}. 
We proceed  by induction on $k$ and, 
 assuming  $\sigma_{k-1}< \infty$ almost surely, we show that   $\mathbb{P}(\sigma_k < \infty)=1$. 
 
By definition $W_{\sigma_{k-1}}=\Root$. If $W_{\sigma_{k-1}+1}=\Root$, then $\sigma_k$ is also finite. We then consider the case $W_{\sigma_{k-1}+1}\neq\Root$ and look at $\mathbb{P}(\sigma_k < \infty\mid W_{\sigma_{k-1}+1}\neq r)$. The random walk has then moved to one of the children of the root, and it  needs to pass by the root in order to traverse the loop and change parity. Until this does not happen, the parity of the walker is constant,  the level of newly added nodes will be opposite to that of the random walk, while the set of nodes with the same parity will not change. 

We assume for the moment that $\sigma_{k-1}$ is even and then the random walk is even in the interval $[\sigma_{k-1},\sigma_{k}]$. We look at the random walk every two steps and, for $t\in [\sigma_{k-1},\sigma_{k}]$,  define the process $Y_t=W_{2t}$, whose possible values are the nodes at even levels (including $\Root$) and then a finite set. The process $Y_t$ is a non-homogeneous Markov chain, because the addition of new nodes changes the transition probabilities, but we can define a homogeneous embedded Markov chain as follows.
Let $\phi_k\IsDef \inf \{t > \phi_{k-1}: Y_t \neq Y_{\phi_{k-1}}\}$.
The time instants $\phi_k$ are finite almost surely because of Corollary \ref{cor:exit-time}. We can then define the process $Z_k=Y_{\phi_k}$ and introduce the stopping time $\eta\IsDef \inf \{t> 0 : Z_t=r\}$, the first  time  the process $Z_k$ returns to the root. If $\mathbb{P}(\eta < \infty)=1$, then $\mathbb{P}(\sigma_k < \infty\mid W_{\sigma_{k-1}+1}\neq r)=1$.
The process $\{Z_k\}_k$   is an irreducible  time homogeneous Markov chain and its state space if finite (it is the same of $\{Y_t\}_t$). The time homogeneity follows from noticing that the transitions where $Y_t$ changes its state are determined by the graph configuration at time $\sigma_{k-1}$ and do not change afterwards because of the addition of new nodes (what changes it's the distribution of the sojourn times $\phi_k-\phi_{k-1}$).  
  An homogeneous irreducible Markov chain on  a finite state space  is recurrent, thus $\mathbb{P}(\eta < \infty \mid Z_0=r)=1$, and the claim follows.
  
 If $\sigma_{k-1}$ is odd, then $r$ does not belong itself to the set of possible values of $W_{2t}$ and then $Y_t$ and $Z_k$, but we can reason as follows. We imagine to cut the self-loop and add to a node $\Root'$ that is the new root of the graph connected by two edges to $\Root$. $\Root'$ is a possible value for $W_{2t}$ and reasoning as above we can conclude that with probability one the random walk will reach $\Root'$ in a finite time. But, in order to arrive to $\Root'$ the walker has to pass by $\Root$. It follows again that $\sigma_k< \infty$ with probability one.

\qed
\end{proof}

\begin{remark}
Theorem~\ref{thm:recurrence} and Lemma~\ref{lem:neighbor} imply that the random walk changes parity infinitely many times almost surely.
\end{remark}

\subsection{Fraction of leaves in the graph with $\s$ even}

In this section we provide an asymptotic lower bound for the fraction of leaves in \Name{} model for every $\s$ even.
For $\s=2$, we prove that the fraction of leaves goes to one as the size of the graph goes to infinity. 
This implies that when $s=2$ the graph generated by \Name{} does not follow a power law degree distribution, 
contrasting with the preferential attachment model of Barab{\'a}si-Albert~\cite{barabasi1999emergence, bollobas2001degree}. 

The related model of Saram{\"a}ki and Kaski~\cite{saramaki2004scale} exhibits a similar characteristic, and was shown that under a given parameter choice the fraction of leaves also converges to one, asymptotically~\cite{cannings2013random}. This theoretical result was important since prior experimental evidence (wrongly) suggested that the model of Saram{\"a}ki and Kaski always led to a power law degree distribution, as in the model of Barab{\'a}si-Albert.

%\com{\small THIS SHOULD BE MENTIONED SOMEWHERE:
%The idea of proving  that the fraction of leaves is  asymptotically  $1$ as a way of showing  that the degree distribution differs from that of preferential attachment models is not new.
%%
%It has already been used in \cite{cannings2013random}, where the authors  showed that the model introduced by Saram{\"a}ki and Kaski in \cite{saramaki2004scale} is different from  BA preferential attachment model.
%}
%
%%
%According to the simulations in \cite{amorim2016growing}, this seems not to be the case.
%%
%In this section, we formally prove that at least for  $\s=2$, this is indeed not the case.  Specifically, we show that  the density of leaves goes to $1$ as the size of the graph goes to infinity.
%%
%This represents a significant difference with the preferential attachment model of Barab{\'a}si-Albert \com{it would be good to say more about this point}. 
 %
 
%\medskip

Before stating the main theorem we introduce an auxiliary result which will  be instrumental in its proof. Let $L^\s_n$ denote the number of leaves in the graph at time $\s n$ (i.e., soon after the $n$-th vertex has been added).
Note that  $L^\s_n$ cannot decrease with the addition of new vertices. A new node always joins the network as a leaf, and either connects to an existing leaf or a non-leaf. In the former case, the number of leaves does not increase, whereas in the latter it increases by one. 

If the addition of the $n$-th vertex does not increase the number of leaves, we know that at time $\s n$ the random walk resides on a vertex which has only two neighbours; a parent and a leaf (the node just added).
 Let ${\rm T}$ be the random variable denoting the first time the random walk visits the parent of a vertex $i$ after adding the first leaf to $i$.  
It turns out that in order to provide an asymptotic lower bound for the fraction of leaves it is enough to compute the expected value of ${\rm T}$. 
The random variable ${\rm T}$ takes value in the positive and odd integers and its distribution only depends on $\s$. Specifically, we have 
%\begin{equation}\label{eq:distr}
%\mathbb{P}(T=2k - 1) = 
%\begin{cases}
%\left(\frac{1}{2}\right)^k & \quad\text{ for $k \in \{1,\ldots, \frac{\s}{2}\}$} 
%\\
%\left(\frac{1}{2}\right)^{\frac{\s}{2}} \left(\frac{2}{3}\right)^{k-1-\frac{\s}{2}} \frac{1}{3} & \quad \text{ for $k \in \{\frac{\s}{2}+1,\ldots, 2 \frac{\s}{2}\}$} 
%\\
%\left(\frac{1}{3}\right)^{\frac{\s}{2}} \left(\frac{3}{4}\right)^{k-1-2\frac{\s}{2}} \frac{1}{4} & \quad \text{ for $k \in \{2\frac{\s}{2}+1,\ldots, 3 \frac{\s}{2}\}$} 
%\\
%\left(\frac{1}{4}\right)^{\frac{\s}{2}} \left(\frac{4}{5}\right)^{k-1-3\frac{\s}{2}} \frac{1}{5} & \quad \text{ for $k \in \{3\frac{\s}{2}+1,\ldots, 4 \frac{\s}{2}\}$} 
%\\
%\left(\frac{1}{5}\right)^{\frac{\s}{2}} \left(\frac{5}{6}\right)^{k-1-4\frac{\s}{2}} \frac{1}{6} & \quad \text{ for $k \in \{4\frac{\s}{2}+1,\ldots, 5 \frac{\s}{2}\}$} 
%\\
%\qquad \qquad \vdots & \qquad \ldots 
%\end{cases}
%\end{equation}
%
\begin{equation*}
\mathbb{P}({\rm T}=2k + 1) = 
\begin{cases}
\left(\frac{1}{2}\right)^{k+1} & \quad\text{ for $k \in \{0,\ldots, \frac{\s}{2}-1\}$} 
\\
\left(\frac{1}{2}\right)^{\frac{\s}{2}} \left(\frac{2}{3}\right)^{k-\frac{\s}{2}} \frac{1}{3} & \quad \text{ for $k \in \{\frac{\s}{2},\ldots, 2 \frac{\s}{2}-1\}$} 
\\
\left(\frac{1}{3}\right)^{\frac{\s}{2}} \left(\frac{3}{4}\right)^{k-2\frac{\s}{2}} \frac{1}{4} & \quad \text{ for $k \in \{2\frac{\s}{2},\ldots, 3 \frac{\s}{2}-1\}$} 
\\
\left(\frac{1}{4}\right)^{\frac{\s}{2}} \left(\frac{4}{5}\right)^{k-3\frac{\s}{2}} \frac{1}{5} & \quad \text{ for $k \in \{3\frac{\s}{2},\ldots, 4 \frac{\s}{2}-1\}$} 
\\
\left(\frac{1}{5}\right)^{\frac{\s}{2}} \left(\frac{5}{6}\right)^{k-4\frac{\s}{2}} \frac{1}{6} & \quad \text{ for $k \in \{4\frac{\s}{2},\ldots, 5 \frac{\s}{2}-1\}$} 
\\
\qquad \qquad \vdots & \qquad \ldots 
\end{cases}
\end{equation*}

In a more compact form, using that for any $k \in \{0,1,\dots,\}$ there exist  unique $q_k \in \{0,1,\dots\}$ and  $r_k \in \{0,1, \dots, \s/2-1\}$ such that
$k= q_k \frac{s}{2} + r_k$, we can write 
\begin{align}\label{eq:distr}
\mathbb{P}({\rm T}=2k + 1) = \left( \frac{1}{q_k + 1} \right)^{\s/2}  \left( \frac{q_k + 1}{q_k + 2} \right)^{r_k}\frac{1}{q_k + 2}
%\mathbb{P}({\rm T}=2k - 1) = \left( \frac{1}{q_k + 1} \right)^{\s/2}  \left( \frac{q_k + 1}{q_k + 2} \right)^{r_k}\frac{1}{q_k + 1} \left(\mathbbm{1}_{\{r_k \neq 0\}} +  \mathbbm{1}_{\{r_k = 0\}} \frac{q_k+1}{q_k } \right) 
\;,
\end{align}
and, in particular, $\mathbb{P}({\rm T}\geq 2k + 1) = \left( \frac{1}{q_k + 1} \right)^{\s/2}  \left( \frac{q_k + 1}{q_k + 2} \right)^{r_k}$.
If follows from Corollary~\ref{cor:exit-time} that ${\rm T}$ is finite almost surely for every $\s$ even. It can also be computed directly from Equation~\eqref{eq:distr} that $\mathbb{P}({\rm T}< \infty)=\sum_{k=1}^\infty \mathbb{P}({\rm T}=2k + 1)=1$.
In the next lemma we compute the expected value of ${\rm T}$. %  according to the distribution $\mathbb{P}$, which is denoted  by  $\mathbb{E} ({\rm T})$.
\begin{lemma}\label{lem:expect}
For $\s$ even  it holds that 
\[
\mathbb{E} ({\rm T})= 
%\sum_{k=1}^\infty (2k-1) \mathbb{P}(\overline{{\rm T}}_1=2k - 1)=
1 + 2\,\zeta\left(\s/2\right)\;,
\]
where $\zeta(z)=\sum_{m=1}^\infty\frac{1}{m^{z}}$ is the Reimann zeta function. % and $s/2 \in \mathbb{N}$. 
\end{lemma}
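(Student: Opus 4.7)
The plan is to exploit that $T$ takes only odd positive values and reduce the expectation to a telescoping sum that produces the zeta function.

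First, I would convert the expectation into a tail sum. Since $T$ is almost surely odd, $\mathbb{P}(T \geq 2k) = \mathbb{P}(T \geq 2k+1)$ for every $k \geq 1$, so
\begin{align*}
\mathbb{E}(T) \;=\; \sum_{n \geq 1} \mathbb{P}(T \geq n) \;=\; \mathbb{P}(T\geq 1) + 2\sum_{k\geq 1} \mathbb{P}(T \geq 2k+1) \;=\; 1 + 2\sum_{k\geq 1} \mathbb{P}(T \geq 2k+1).
\end{align*}
Thus it suffices to show $\sum_{k\geq 1} \mathbb{P}(T \geq 2k+1) = \zeta(s/2)$, or equivalently (adding the $k=0$ term, which equals $1$) that $\sum_{k\geq 0} \mathbb{P}(T \geq 2k+1) = 1 + \zeta(s/2)$.

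Next, I would re-index the sum using the block decomposition $k = q\,(s/2) + r$ with $q \geq 0$ and $0 \leq r \leq s/2-1$, as in the statement preceding the lemma. The tail formula quoted there gives $\mathbb{P}(T \geq 2k+1) = (q+1)^{-s/2}\bigl((q+1)/(q+2)\bigr)^{r}$, so
\begin{align*}
\sum_{k\geq 0} \mathbb{P}(T \geq 2k+1)
\;=\; \sum_{q \geq 0} \left(\frac{1}{q+1}\right)^{\!s/2} \sum_{r=0}^{s/2-1}\left(\frac{q+1}{q+2}\right)^{\!r}.
\end{align*}
I would then evaluate the finite geometric inner sum: with $x=(q+1)/(q+2)$, $\sum_{r=0}^{s/2-1} x^r = (1-x^{s/2})/(1-x) = (q+2)\bigl(1-((q+1)/(q+2))^{s/2}\bigr)$. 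Plugging back, the generic summand collapses to $(q+2)\bigl[(q+1)^{-s/2} - (q+2)^{-s/2}\bigr]$.

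The core calculation is then a summation-by-parts on $\sum_{q\geq 0}(q+2)(a_q - a_{q+1})$ with $a_q \IsDef (q+1)^{-s/2}$. Shifting the index in the second half gives $\sum_{q\geq 0}(q+2)a_q - \sum_{q\geq 1}(q+1)a_q = 2a_0 + \sum_{q\geq 1} a_q = a_0 + \sum_{q\geq 0} a_q = 1 + \zeta(s/2)$, where the boundary term $(N+2)a_{N+1} = (N+2)^{1-s/2} \to 0$ vanishes because $s \geq 4$ (the case $s=2$ yields $\zeta(1)=\infty$, consistent with the fact that $\mathbb{E}(T)=\infty$ there, as can be seen directly from the tail $\mathbb{P}(T \geq 2k+1)\sim 1/\sqrt{k}$). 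Subtracting the $k=0$ contribution (which is $1$) from $1+\zeta(s/2)$ gives $\sum_{k\geq 1}\mathbb{P}(T \geq 2k+1) = \zeta(s/2)$, and the lemma follows.

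The only mildly delicate step is the summation-by-parts and the justification that the boundary term vanishes for $s\geq 4$; everything else is a mechanical reduction once the tail is parameterised by $(q,r)$. I do not expect any analytical difficulty beyond keeping track of indices.
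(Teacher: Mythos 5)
Your proposal is correct and follows essentially the same route as the paper's own proof: the tail-sum formula $\mathbb{E}({\rm T})=1+2\sum_{k\geq 1}\mathbb{P}({\rm T}\geq 2k+1)$ for the odd-valued ${\rm T}$, the re-indexing $k=q\,\tfrac{\s}{2}+r$, evaluation of the geometric inner sum, and an index shift (the paper's telescoping manipulation) yielding $1+\zeta(\s/2)$, with your explicit treatment of the vanishing boundary term being a slightly more careful write-up of the same step. One tiny slip in a side remark: for $\s=2$ the tail is $\mathbb{P}({\rm T}\geq 2k+1)=\tfrac{1}{k+1}$, not of order $k^{-1/2}$, but the series still diverges so your conclusion $\mathbb{E}({\rm T})=\infty$ for $\s=2$ is unaffected.
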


Note that  $\mathbb{E} ({\rm T}) = + \infty$ for  $\s=2$, whereas  $\mathbb{E} ({\rm T}) < \infty$, for $s\geq 4$.

\begin{proof}
Recall  that if $X$ is a random variable which takes only positive integer values then  $\mathbb{E}(X)= \sum_{i=1}^\infty \mathbb{P}(X\geq i)$.
The random variable ${\rm T}$ only takes odd integer values, which implies that $\mathbb{P}({\rm T}\geq i)=\mathbb{P}({\rm T}\geq i+1)$, for every even $i$. Therefore, when summing over all integer values strictly bigger than one, we are summing twice the contributions of the odd values and  \begin{equation}\label{eq:sum_prob}
\mathbb{E} ({\rm T})=\sum_{i=1}^\infty \mathbb{P}({\rm T}\geq i)= \mathbb{P}({\rm T}\geq 1) + 2 \sum_{k=1}^{\infty}  \mathbb{P}({\rm T}\geq 2k+1)=2\sum_{k=0}^{\infty} \mathbb{P}({\rm T}\geq 2k+1)-1\;, 
\end{equation}
where, in the last equality we used that $\mathbb{P}({\rm T}\geq 1)=1$.
The probabilities appearing on the right-hand side of Equation~\eqref{eq:sum_prob} satisfy,
$\mathbb{P}({\rm T}\geq 2k + 1) = \left( \frac{1}{q_k + 1} \right)^{\s/2}  \left( \frac{q_k + 1}{q_k + 2} \right)^{r_k}$,
%\end{align*}
where, $q_k \in \{0,1,\dots\}$ and  $r_k \in \{0,1, \dots, \s/2-1\}$ are such that $k= q_k \frac{s}{2} + r_k$. 
%
%In a more compact form, using that for any $k \in \{0,1,\dots,\}$ it exist  unique $q_k \in \{0,1,\dots\}$ and  $r_k \in \{0,1, \dots, \s/2-1\}$ such that
%$k= q_k \frac{s}{2} + r_k$, we can write 
Therefore, 
\begin{equation*}
\begin{split}
&\sum_{k=0}^\infty \mathbb{P}({\rm T}\geq 2k + 1) = \sum_{q=0}^{\infty}  \left(\frac{1}{q+1}\right)^{\s/2} \;\sum_{r=0}^{\s/2-1} \left(\frac{q+1}{q+2}\right)^{r} 
=\\
%&
%+
%\left(\frac{1}{3}\right)^{\frac{\s}{2}}\sum_{k=2\s/2 +1}^{3\s/2} \left(\frac{3}{4}\right)^{k-1-2\s/2}\\
%&
%+ \left(\frac{1}{4}\right)^{\frac{\s}{2}}\sum_{k=3\s/2 +1}^{4\s/2} \left(\frac{4}{5}\right)^{k-1-3\s/2} + \ldots  \\
%&
%\overset{1)}{=}\sum_{m=0}^{\s/2-1} \left(\frac{1}{2}\right)^k + \left(\frac{1}{2}\right)^{\frac{\s}{2}}\sum_{m=0}^{\s/2 -1} \left(\frac{2}{3}\right)^{m} +
%\left(\frac{1}{3}\right)^{\frac{\s}{2}}\sum_{m=0}^{\s/2-1} \left(\frac{3}{4}\right)^{m}\\ 
%& + \left(\frac{1}{4}\right)^{\frac{\s}{2}}\sum_{m=0}^{\s/2-1} \left(\frac{4}{5}\right)^{m}  + \left(\frac{1}{5}\right)^{\frac{\s}{2}}\sum_{m=0}^{\s/2-1} \left(\frac{5}{6}\right)^{m}+ \ldots  \\
%&
%= \sum_{k=1}^\infty \left(\frac{1}{k}\right)^{\s/2} \sum_{m=0}^{\s/2-1} \left(\frac{k}{k+1}\right)^{m}
%\\
& = \sum_{m=1}^\infty (m+1)\left(\frac{1}{m}\right)^{\s/2} \left( 1- \left(\frac{m}{m+1}\right)^{\s/2}\right) 
=  \sum_{m=1}^\infty (m+1)\left(\frac{1}{m^{\s/2}} - \frac{1}{(m+1)^{\s/2}}\right)
\\
&
= 1+ \sum_{m=1}^\infty m\left(\frac{1}{m^{\s/2}} - \frac{1}{(m+1)^{\s/2}}\right)
=  1 +  \sum_{m=1}^\infty \frac{1}{m^{\s/2}}
\end{split} 
\end{equation*}
Overall, we obtain 
\[
\mathbb{E}({\rm T})= 2\left( 1 +  \sum_{m=1}^\infty \frac{1}{m^{\s/2}} \right) -1 = 1 + 2 \sum_{m=1}^\infty \frac{1}{m^{\s/2}}
\]
\qed
\end{proof}

\medskip

We can now state the main theorem of this section.

%{\tiny 
%Using the same argument used in Theorem~\ref{th:density_leaves}, we can use the value of $\mathbb{E}(\overline{T}_1)$ in  Lemma~\ref{lem:expect} to obtain a  bound on the density of leaves for any $\s$ even. 
%In particular, if we denote by $L_n^\s$ the number of leaves at time $\s n$ (i.e., soon after the addition of the $n$-th vertex), and by $N_n^\s$ the number of non-leaves vertices at time $\s n$, we have that 
%%
%$0\leq \frac{N_n^\s}{n}\leq \frac{\overline{N}^\s_n}{n}$, where $\overline{N}^\s_n$ is the counting process of the process $\overline{S}^\s_m= \overline{T}_1 + \overline{T}_2 + \ldots \overline{T}_m$,  where here $(\overline{T}_l)_{l\in \mathbb{N}}$ is a sequence of i.i.d random variable 
%whose distribution depends on $\s$ (see,  Equation~\ref{eq:distr}). Using the Strong Law of Large Numbers for $\overline{N}^\s_n$, 
%we obtain $\lim_{n 
%\uparrow \infty} \frac{N^\s_n}{n}\leq  \frac{1}{\mathbb{E}(\overline{T}_1)}$ \com{should be almost surely}, which implies $\lim_{n\uparrow \infty}\frac{L_n^\s}{n}\geq  1-\frac{1}{\mathbb{E}(\overline{T}_1)}$.
%
%}

\begin{theorem}\label{th:density_leaves}
Let $\s$ be even and $L_n^\s$ denote the number of leaves in the graph of size $n$.  It holds that   
\[
\liminf_{n\uparrow \infty}\frac{L^\s_n}{n}\geq 1-\frac{1}{\mathbb{E}({\rm T} )} \qquad  \text{almost surely}\;,
\]
where $\mathbb{E}({\rm T})$ is given in Lemma~\ref{lem:expect}.
\end{theorem}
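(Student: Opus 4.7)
The plan is to set up a renewal-theoretic framework over the sequence of ``promotions'' (additions that do not increase the leaf count) and apply a SLLN. Let $\tau_1 < \tau_2 < \cdots$ denote the addition indices at which promotions occur, and $K_n = \#\{k:\tau_k\le n\}$. Since $L_n^\s = n - K_n$, it suffices to show $\limsup_n K_n/n \le 1/\mathbb{E}({\rm T})$ almost surely.

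At the $k$-th promotion, the receiving vertex $i_k$ has degree exactly $2$---its parent and the newly added leaf---which matches the initial configuration of the star process that defines ${\rm T}$. By the strong Markov property, the escape time ${\rm T}_k$ (walker-steps from the promotion until the walker first visits the parent of $i_k$) has the star-process law of Equation~\eqref{eq:distr}, and by Corollary~\ref{cor:exit-time} is almost surely finite. During the bouncing interval the walker sits at $i_k$ at every even walker-step, hence at every addition time (since $\s$ is even), so every addition inside the interval attaches to the non-leaf $i_k$. In particular no second promotion can occur while an earlier one is still bouncing, which shows that the bouncing intervals of distinct promotions are pairwise disjoint in walker-time, yielding the key inequality $\sum_{k=1}^{K_n} {\rm T}_k \le \s n$.

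A strong-law-of-large-numbers argument then delivers the asymptotic. Since each ${\rm T}_k$ has conditional marginal law ${\rm T}$ at its promotion epoch, one couples $({\rm T}_k)_k$ to an i.i.d.\ sequence with mean $\mathbb{E}({\rm T})$ (with the standard convention handling the $\s=2$ case $\mathbb{E}({\rm T})=+\infty$) to conclude $K_n^{-1}\sum_{k\le K_n}{\rm T}_k \to \mathbb{E}({\rm T})$ almost surely. For $\s=2$ this, combined with the disjointness inequality, is already enough: $K_n/n \to 0$ and hence $L_n^\s/n \to 1$, which matches the claim since $1/\mathbb{E}({\rm T})=0$.

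For general even $\s$, obtaining the sharp constant $1/\mathbb{E}({\rm T})$ rather than the $\s/\mathbb{E}({\rm T})$ produced by the naive walker-step bound is the main obstacle. The refinement I would pursue is to count \emph{addition-gaps} directly: the inter-promotion gap $\tau_{k+1}-\tau_k$ equals the promotion itself, the $\lfloor {\rm T}_k/\s\rfloor$ attachments deposited on $i_k$ during the bouncing, and further attachments placed at non-leaf vertices visited by the walker after it escapes $i_k$ but before it next occupies a leaf at an addition time. Using Theorem~\ref{thm:recurrence} together with a coupling to the star process of Proposition~\ref{pro:degree-power_law} to control the post-escape excursion, I would argue that $\mathbb{E}(\tau_{k+1}-\tau_k)\ge\mathbb{E}({\rm T})$; plugging this into the renewal SLLN then yields $\limsup_n K_n/n \le 1/\mathbb{E}({\rm T})$ and hence the theorem. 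The technical heart of the proof is precisely this post-escape bound, and that is where I expect most of the work to lie.
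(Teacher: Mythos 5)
Your setup coincides with the paper's: you work with the ``promotion'' epochs (additions that do not increase the leaf count), observe that at such an epoch the walker sits on a degree-two vertex whose neighbours are its parent and the fresh leaf, identify the escape time with the star-process variable ${\rm T}$ of Equation~\eqref{eq:distr} (finite a.s.\ by Corollary~\ref{cor:exit-time}), and feed an i.i.d.\ sequence into a renewal/SLLN argument; your disjointness observation and the conclusion for $\s=2$ (where $\mathbb{E}({\rm T})=\infty$) are correct. The genuine gap is that for $\s\ge 4$ you never prove the stated bound. The walker-step inequality $\sum_{k\le K_n}{\rm T}_k\le \s n$ only yields $\limsup_n K_n/n\le \s/\mathbb{E}({\rm T})$, which is far weaker than $1/\mathbb{E}({\rm T})$ for $\s=4$ and vacuous for $\s\ge 6$ (there $\mathbb{E}({\rm T})=1+2\zeta(\s/2)<6$), and your proposed repair --- the inequality $\mathbb{E}(\tau_{k+1}-\tau_k)\ge\mathbb{E}({\rm T})$ counted in vertex additions, to be extracted from a ``post-escape'' analysis via Theorem~\ref{thm:recurrence} and Proposition~\ref{pro:degree-power_law} --- is announced but not carried out, and it is not clear how it would go: the bouncing phase alone cannot deliver it, since the expected number of additions deposited on $i_k$ before escape is $\zeta(\s/2)-1$, so a per-addition version of what you actually established gives at best $\liminf_n L^\s_n/n\ge 1-1/\zeta(\s/2)$, and recurrence carries no quantitative information about how many additions elapse before the walker next occupies a leaf at an addition time. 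Thus the heart of the theorem for $\s\ge4$, and of the uniform $2/3$ bound in Corollary~\ref{th:density_leaves_s}, is missing from your proposal.

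For comparison, the paper does not attempt the post-escape excursion analysis you sketch. It keeps the inter-promotion gaps $T_l$ in walker time, dominates them from below pathwise by i.i.d.\ copies $\overline{T}_l$ of ${\rm T}$ (until the walker hits the parent of $i_l$ it is confined to the star around $i_l$, so every addition in that span goes to $i_l$ and no promotion can occur, whence $\overline{T}_l< T_l$), applies the renewal SLLN to the counting process built from the $\overline{T}_l$, and then reads the renewal count $\max\{m: S_m\le n\}$ off as the number of non-leaf vertices in the graph of size $n$. The walker-time versus addition-count conversion that worries you --- the factor $\s$ --- sits exactly inside that last identification, and the paper's write-up does not separate the two time scales the way you do; so your diagnosis of where the delicacy lies is well taken, but your attempt stops precisely there, and as submitted it proves the theorem only in the case $\s=2$.
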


The above theorem immediately implies the following results:
\begin{corollary}\label{th:density_leaves_s}
Let $L_n^\s$ denote the number of leaves in the graph of size $n$, then
\begin{itemize}
\item
for $s=2$, $\lim_{n\uparrow \infty}\frac{L_n^\s}{n} = 1$, a.s.
\item
for every $s$ even, $\liminf_{n\uparrow \infty}\frac{L_n^\s}{n}\geq \frac{2}{3}$, a.s.
\end{itemize}
\end{corollary}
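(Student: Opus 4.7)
The plan is a renewal-type argument keyed to the sequence of leaf extensions --- those additions whose new vertex is attached to an existing leaf (turning the leaf into a non-leaf). Let $\tau_k$ denote the addition-index at which the $k$-th leaf extension occurs and $G_k = \tau_{k+1} - \tau_k$ the inter-extension gap. Then $N_n := n + 1 - L_n^\s = \#\{k : \tau_k \le n\} + 1$ (the $+1$ accounts for the root, which has a self-loop and is never a leaf), so it suffices to prove $\limsup_n N_n/n \le 1/\mathbb{E}(T)$ almost surely, from which the theorem follows by taking complements.

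The crux is to establish a stochastic lower bound $G_k \succeq T_k$, where $(T_k)_k$ is an i.i.d.\ sequence of copies of the random variable $T$ from Lemma~\ref{lem:expect}, obtained via a coupling. The motivation is that at time $\s \tau_k$ the walker sits on the new non-leaf $V_k$, which has exactly two neighbors --- its parent $p_k$ and the freshly attached leaf --- precisely the initial configuration of the auxiliary star-growing process used in the proof of Lemma~\ref{lem:expect}. As long as the walker has not yet visited $p_k$, every subsequent addition must attach to $V_k$ and is therefore not a leaf extension. Moreover, any excursion of the walker outside the star of $V_k$ that does not immediately produce a leaf extension elsewhere only delays $\tau_{k+1}$, so $G_k$ dominates the natural quantity predicted by a fresh copy of $T$ used as a renewal variable.

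Once $G_k \succeq T_k$ with $(T_k)$ i.i.d.\ is in hand, the strong law of large numbers gives $k^{-1} \sum_{j \le k} T_j \to \mathbb{E}(T)$ almost surely (interpreted as $+\infty$ when $\s = 2$, where $\mathbb{E}(T) = \infty$), and telescoping $\tau_k = \tau_1 + \sum_{j < k} G_j$ yields $\liminf_k \tau_k/k \ge \mathbb{E}(T)$ a.s., from which $\limsup_n N_n/n \le 1/\mathbb{E}(T)$ and hence the desired bound on $L_n^\s/n$ follow by inversion. The main difficulty lies in the coupling of the middle paragraph: the NRRW dynamics after $\tau_k$ are non-Markovian (the graph continues to grow and the walker's future depends on its entire history), so attaching an independent copy of $T$ to each gap requires a filtration-adapted construction that exploits the fact that the local state at $V_k$ at the moment $\tau_k$ always matches the star-process starting configuration, irrespective of everything that happened earlier.
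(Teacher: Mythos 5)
The paper's own proof of this corollary is a two-line deduction: it invokes Theorem~\ref{th:density_leaves} ($\liminf_n L^\s_n/n \ge 1-1/\mathbb{E}({\rm T})$) together with Lemma~\ref{lem:expect}, noting that $\mathbb{E}({\rm T})=\infty$ for $\s=2$ and $\mathbb{E}({\rm T})=1+2\zeta(\s/2)\ge 3$ for every even $\s$. You instead re-derive the content of Theorem~\ref{th:density_leaves}; your skeleton (leaf-extension times, comparison with i.i.d.\ copies of ${\rm T}$, strong law, inversion of the counting process) is precisely the paper's proof of that theorem, so the route is essentially the same, not a new one.

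There is, however, a genuine gap at your crux step. The claimed domination $G_k \succeq T_k$ compares incommensurable quantities: $G_k$ counts \emph{node additions}, while ${\rm T}$ of Equation~\eqref{eq:distr} is a hitting time measured in \emph{walker steps}. Your confinement argument (no leaf extension can occur before the walker first hits $p_k$) only gives that the walker-time gap $\s\, G_k$ exceeds a fresh copy of ${\rm T}$, i.e.\ $G_k \ge \lceil (T_k+1)/\s\rceil$; the stronger statement $G_k\succeq T_k$ is false in general --- once $p_k$ has accumulated leaf children, the very next addition is a leaf extension with probability bounded away from zero (close to $1/2$ for $\s=2$: the walker steps to $p_k$ and then to one of its leaf children), so $\mathbb{P}(G_k\ge 3)<\mathbb{P}({\rm T}\ge 3)=1/2$. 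With the corrected comparison the strong law yields only $\limsup_n N_n/n \le \s/(\mathbb{E}({\rm T})+1)$, i.e.\ $\liminf_n L^\s_n/n \ge 1-\s/(2+2\zeta(\s/2))$: this does prove the first bullet (for $\s=2$ the bound is $1$ since $\zeta(1)=\infty$), but for $\s=4$ it gives roughly $0.24$ and for $\s\ge 6$ it is vacuous, so the uniform $2/3$ bound is not reached. To obtain the corollary as stated you must either invoke Theorem~\ref{th:density_leaves} directly (as the paper does, adding the elementary observation $\mathbb{E}({\rm T})\ge 3$, which your sketch also never verifies) or genuinely control the number of additions --- not walker steps --- between consecutive leaf extensions; note that the paper's proof of the theorem states its comparison $\overline{T}_l<T_l$ with both gaps in walker-time units, and the conversion from steps to added nodes is exactly where your write-up loses a factor of $\s$.
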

The above follows since $\mathbb{E}({\rm T}) = + \infty$ for $s=2$, and
$\mathbb{E}({\rm T})\geq 3$ for every $s$ even. 
%
%\begin{enumerate}[i)]
%\item Note that $\mathbb{E}(T)\geq 3$ for every $\s$. 
%This assures that $\lim_{n\uparrow \infty}\frac{L_n^\s}{n}\geq \frac{2}{3}$, for every $\s$.
%\item If $s=4$, using that $\sum_{m=1}^\infty \frac{1}{m^2}= \frac{\pi^2}{6}$, we obtain $\lim_{n\uparrow \infty}\frac{L_n^\s}{n}\geq  \frac{\pi^2 }{3+ \pi^2}\geq \frac{7}{10}$.
%\item To obtain a bound which explicitly depends on $\s$ we can truncate  the sum $\sum_{m=1}^\infty\frac{1}{m^{\s/2}}$;  for example
%\[
%\mathbb{E} (\overline{T}_1)= 1 + 2\sum_{m=1}^\infty\frac{1}{m^{\s/2}}\geq 1 + 2(1 + \frac{1}{2^{\s/2}})= 3 + \frac{2}{2^{\s/2}}\;,
%\]
%which gives $\lim_{n\uparrow \infty}\frac{L_n^\s}{n}\geq  1-\frac{1 }{\mathbb{E}(\overline{T}_1)}= \frac{2}{3} \frac{2^{\s/2} + 1}{2^{\s/2} + 2/3}$.
%\end{enumerate}

\begin{proof}
%In order to prove the claim we show that  $\lim_{n\uparrow\infty}\frac{n-L^\s_n}{n}=0$ a.s., where $n-L^\s_n$ denotes the number of non-leaf vertices at time  $\s n$.
%
%The proof uses renewal theory together with the fact that  the number of leaves added between two vertex additions which do not increase the number of leaves is, in average, infinite.
%
 
%Let $T_0\IsDef \s \cdot \inf \{n : L^\s_n -L^\s_{n-1}=0 \}$ denote the first time at which the number of leaves does not increase with the addition of a new node.
%
%It can be shown that $\mathbb{P}(T_0< \infty)=1$, i.e., with probability one  there exists a finite $n_0$ such that $L^\s_{n_0}-L^\s_{n_0-1}=0$ and  $L^\s_n - L^\s_{n-1}=1$ for all $n< n_0$.  
% The latter can also be rephrased by saying that   almost surely a new vertex will be added, in a finite time,  to a child of the initial vertex (the root).
%

%Note that when the  vertex $n_0$  has been added, the random walk resides in a vertex with degree two; in particular, one of the two neighbours is a leaf (the just added vertex $n_0$) and the other  neighbour is the root.
%
%Let us define $T_1\IsDef  \inf \{2n> 2n_0 : L_n -L_{n-1}=0 \}$ the time between the creation of the second and third non-leaf vertices (the first non-leaf vertex is accounted by the root). Similarly, 

For $l=1,2,\ldots$, let $A_l\IsDef \s \cdot \inf \{n>A_{l-1} : L^\s_n -L^\s_{n-1}=0 \}$ denote the time of the $l$-th vertex addition that does not increase the number of leaves in the graph (where $A_0 \equiv 0$). 
If there exists an $l$ such that $A_l=+\infty$ we set $A_{l'}=\infty$ for all $l'>l$.
For all sample paths such that there exists an $l$ with $A_l= +\infty$, we have $\lim_{n \uparrow \infty}\frac{L_n^\s}{n} = 1$, which clearly implies  $\liminf_{n\uparrow \infty}\frac{L^\s_n}{n}\geq 1-\frac{1}{\mathbb{E}({\rm T} )}$. In this case, in fact,  
%we only add a finite number of vertices without increasing the number of leaves and, in particular, 
there exists a finite time such that  every vertex added after it will increase the total number of leaves.

We consider now all sample paths such that $A_l< \infty$,  for all $l$. 
To prove the claim  it is enough to show that, conditioned on the latter set of sample paths, 
$\liminf_{n\uparrow \infty}\frac{L_n^\s}{n}\geq  1-\frac{1}{\mathbb{E}({\rm T})}$ almost surely. 
%
%It can be shown that $\mathbb{P}(A_l< \infty)=1$, i.e., with probability one  there exists a finite $n_0 > A_{l-1}$ such that $L^\s_{n_0}-L^\s_{n_0-1}=0$ and  $L^\s_n - L^\s_{n-1}=1$ for all $A_{l-1} < n < n_0$ \com{ I am not sure we can show  that!}.  
%
Let $T_l = A_{l} - A_{l-1}$ denote the time between the $(l+1)$-th and $l$-th such node additions. 
% \com{These time might not be well-defined if both $A_{l}$ and $A_{l-1}$ are infinite: We should argue that is there exist a $A_l$ which is infinite then for the corresponding sample paths the number of leaves grows with $n$ and the density goes to one. If all of them are finite than we can argue as shown below}. 
Define  $S_m \IsDef T_1 + T_2 + \cdots + T_m$ and $N^\s_n= \max \{m : S_m \leq n\}$. 
Note that $N^\s_n$ is the  number of non-leaf vertices in the graph at time  $\s n$ (not counting the root). Thus, we have that $N^\s_n +1 = n - L^\s_n$, and to prove the claim it suffices to provide an upper bound for $\limsup_{n\uparrow \infty}\frac{N^\s_n}{n}$. 

We use renewal theory for the counting process $N^\s_n$. However, since $T_l$ are not independent nor identically distributed, we cannot directly apply renewal theory.
We circumvent this limitation with the following argument.  By definition, $A_l$ are the times at which the number of leaves does not increase. This means that at these times the random walk resides in a vertex of degree two and its neighbors are its parent and a leaf node (the new added node). 
Let $i_l$ denote the vertex where the random walk resides at time $A_l$, i.e., $W_{A_l}=i_l$, and denote by $p_{i_l}$ the parent of vertex $i_l$ (note that $p_{i_l}$ is well defined because $i_l$ can never be the root). Let us define $\overline{T}_l=\inf\{t> A_{l} : \W_{t}=p_{i_l} \} - A_l$, i.e., the amount of time until the random walk visits the parent of vertex $i_l$ after adding  a leaf to  vertex $i_l$ for the first time (which occurs at time $A_l$).
Due to the fact that at time $A_l$ the random walk is in a vertex of degree two, the distribution of $\overline{T}_l$ does not depend on the specific $l$ and it is independent from the past. In particular $\overline{T}_l$ has the same distribution of ${\rm T}$ given in Equation~\eqref{eq:distr} and its expected value is given in Lemma~\ref{lem:expect}.

Consider $\{\overline{T}_l\}_{l\in \mathbb{N}}$  a sequence of i.i.d. random variables distributed as ${\rm T}$. 
%  where,  for $k=0,1,\dots$, we have  $\mathbb{P}(\overline{T}_1=2k - 1)=\frac{1}{k(k+1)}$,  $\mathbb{P}(\overline{T}_1<\infty)=\sum_{k=1}^\infty \mathbb{P}(\overline{T}_1=2k - 1)=1$  and $\mathbb{E}(\overline{T}_1)=\infty$.
%
%Note that, in order for a new vertex to be added without increasing the number of leaves, the random walk must visit the parent vertex. This is because, when $\s$ is even if the random walk does not visit the parent, then necessarily the new vertices will be added to the same vertex. 
% (noteworthy: being  $\s=2$ new vertices cannot be added to the leaves that are attached to vertex $i_l$).
%
For every $l=1,2,\ldots$, it holds that $\overline{T}_l < T_{l}$. This follows since to add a new vertex to an existing leaf (which occurs at time $A_{l+1}$), the random walk must visit the parent node of $i_l$. 
Let us define $\overline{S}_m\IsDef \overline{T}_1 + \overline{ T}_2 + \cdots + \overline{ T}_m$  and  let $\overline{N}^\s_n=\max \{m : \overline{S}_m \leq n\}$ denote the corresponding counting process. %$\overline{N}_n\IsDef \max\{m: \overline{S}_m \leq n\}$.
Since $\overline{T}_l<T_l$, for every $m$ we have that $\overline{S}_m<S_m$ which implies $\overline{N}^s_n>N^s_n$ and consequently $0 \leq \frac{N^\s_n}{n}\leq \frac{\overline{N}^\s_n}{n}$, for every $n$.
Given that $\lim_{n\uparrow \infty} \overline{S}_n=\infty$ a.s. and $\lim_{n\uparrow \infty} \overline{N}^\s_n=\infty$ a.s.,  we can apply the  Strong Law of Large Numbers  for $\overline{N}^\s_n$, stating that $\lim_{n\uparrow\infty}\frac{\overline{N}^\s_n}{n}=1/\mathbb{E}({\rm T})$ a.s., where $\mathbb{E}({\rm T})$ is given in Lemma~\ref{lem:expect}.
Therefore, we obtain that $\limsup_{n 
\uparrow \infty} \frac{N^\s_n}{n}\leq  \frac{1}{\mathbb{E}({\rm T})}$ a.s., which implies $\liminf_{n\uparrow \infty}\frac{L_n^\s}{n}\geq  1-\frac{1}{\mathbb{E}({\rm T})}$.
 \qed
\end{proof}

%!TEX root = main.tex

\section{Final Remarks}\label{sec:conclusion}

The \Name{} model illustrates the powerful interplay of mutually coupled dynamics, in this case random walking and graph building (tree). Interestingly, the result of this interplay is fundamentally governed by the parity of its single parameter $\s$. In particular, the dichotomy between transience and recurrence for the random walk and between power law and exponential degree distributions for the tree. Indeed, \Name{} can be analyzed from two different perspectives: the walker behavior and the graph behavior. Given its mutual dependency, the two perspectives are also fundamentally intertwined as we have shown, without one necessarily driving the other. 

While Theorem~\ref{thm:s_1} holds only for $s=1$, we conjecture that such result is true for all $\s$ odd. Extensive simulations support this conjecture, showing that the degree distribution is bounded by an exponential and that distances from the root grows linearly~\cite{amorim2016growing}. Unfortunately, the coupling technique applied in the proof for the case $s=1$ cannot be extended to larger values of $s$. 

A variation of \Name{} named BGRW (Bernoulli Growth Random Walk) model has recently been proposed and analyzed~\cite{figueiredo2017building}. In BGRW, after every walker step a new node with degree one is connected to the current walker position with probability $p$. The main result states that the random walk in BGRW is transient for any $p>0$. This suggests that recurrence in \Name{} is an artifact of the inherent structural limitations imposed by the even parity of $\s$. Moreover, it also supports the above conjecture (walker in \Name{} is transient for all $\s$ odd). 

Last, for $\s=2$ the degree distribution of \Name{} does not follow a power law degree distribution, as the fraction of leaves (degree 1 nodes) goes to 1 as the network grows. However, conditioned on nodes with degree larger than 1, the conditional degree distribution is lower bounded by a power law distribution. While the original Random Walk Model has global restarts (and thus is fundamentally different from \Name{}) this observation may reconcile the apparently conflicting results of \cite{saramaki2004scale} (which claims that degree distribution follows a power law, for all $\s$) and \cite{cannings2013random} (which proves that the fraction of leaves converges to 1 for $\s=1$).

\bibliography{RW}

\begin{thebibliography}{10}

\bibitem{amorim2016growing}
Bernardo Amorim, Daniel Figueiredo, Giulio Iacobelli, and Giovanni Neglia.
\newblock Growing networks through random walks without restarts.
\newblock In {\em Complex Networks VII}, pages 199--211. Springer, 2016.

\bibitem{angel2014}
Omer Angel, Nicholas Crawford, Gady Kozma, et~al.
\newblock Localization for linearly edge reinforced random walks.
\newblock {\em Duke Mathematical Journal}, 163(5):889--921, 2014.

\bibitem{barabasi1999emergence}
Albert-L{\'a}szl{\'o} Barab{\'a}si and R{\'e}ka Albert.
\newblock Emergence of scaling in random networks.
\newblock {\em science}, 286(5439):509--512, 1999.

\bibitem{bollobas2001degree}
B{\'e}la Bollob{\'a}s, Oliver Riordan, Joel Spencer, G{\'a}bor Tusn{\'a}dy,
  et~al.
\newblock The degree sequence of a scale-free random graph process.
\newblock {\em Random Structures \& Algorithms}, 18(3):279--290, 2001.

\bibitem{cannings2013random}
Chris Cannings and Jonathan Jordan.
\newblock Random walk attachment graphs.
\newblock {\em Electronic Communications in Probability}, 18:1--5, 2013.

\bibitem{dembo2014walking}
Amir Dembo, Ruojun Huang, Vladas Sidoravicius, et~al.
\newblock Walking within growing domains: recurrence versus transience.
\newblock {\em Electron. J. Probab}, 19(106):1--20, 2014.

\bibitem{disertori2015transience}
Margherita Disertori, Christophe Sabot, and Pierre Tarr{\`e}s.
\newblock Transience of edge-reinforced random walk.
\newblock {\em Communications in Mathematical Physics}, 339(1):121--148, 2015.

\bibitem{figueiredo2017building}
Daniel~R Figueiredo, Giulio Iacobelli, Roberto~I Oliveira, Bruce Reed, and
  Rodrigo Ribeiro.
\newblock Building your path to escape from home.
\newblock {\em arXiv:1709.10506}, 2017.

\bibitem{ikeda2014network}
Nobutoshi Ikeda.
\newblock Network formed by movements of random walkers on a bethe lattice.
\newblock In {\em Journal of Physics: Conference Series}, volume 490. IOP
  Publishing, 2014.

\bibitem{newman2010networks}
Mark Newman.
\newblock {\em Networks: an introduction}.
\newblock Oxford university press, 2010.

\bibitem{saramaki2004scale}
Jari Saram{\"a}ki and Kimmo Kaski.
\newblock Scale-free networks generated by random walkers.
\newblock {\em Physica A: Statistical Mechanics and its Applications},
  341:80--86, 2004.

\bibitem{van2016random}
Remco Van Der~Hofstad.
\newblock {\em Random graphs and complex networks}, volume~1.
\newblock Cambridge University Press, 2016.

\bibitem{vazquez2003growing}
Alexei V{\'a}zquez.
\newblock Growing network with local rules: Preferential attachment, clustering
  hierarchy, and degree correlations.
\newblock {\em Physical Review E}, 67(5):056104, 2003.

\end{thebibliography}
\bibliographystyle{plain}

%%%%%%%%%%%%%%%%%%%%%%%%%%%%%%%%%%%%%%%%%%%%%%%%%%%%%%
%%%%%%%%%%%%%%%%%%%%%%%%%%%%%%%%%%%%%%%%%%%%%%%%%%%%%%

\end{document}